\let\mathbb\mathds
\DeclareMathAlphabet\oldmathcal{OMS}        {cmsy}{b}{n}
\SetMathAlphabet    \oldmathcal{normal}{OMS}{cmsy}{m}{n}
\DeclareMathAlphabet\oldmathbcal{OMS}       {cmsy}{b}{n}
\newtheorem{theorem}{Theorem}[section]
\newtheorem{lemma}[theorem]{Lemma}
\newtheorem{proposition}[theorem]{Proposition}
\newtheorem{corollary}[theorem]{Corollary}
\newtheorem{def/prop}[theorem]{Definition/Proposition}
\newtheorem{algorithm}[theorem]{Algorithm}
\newenvironment{example}{\medskip \refstepcounter{theorem}
\noindent  {\bf Example \thetheorem}.\rm}{\,}
\newenvironment{remark}{\medskip \refstepcounter{theorem}
\newcommand     {\comment}[1]   {}
\newcommand{\mute}[2] {}
\newcommand     {\printname}[1] {}

\noindent  {\bf Remark \thetheorem}.\rm}{\,}
\def\<{\langle}
\def\>{\rangle}
\def\BOne{{\mathchoice {\rm 1\mskip-4mu l} {\rm 1\mskip-4mu l}
                          {\rm 1\mskip-4.5mu l} {\rm 1\mskip-5mu l}}}
\def\tr{{\rm tr}~}
\def\fract#1#2{\raise4pt\hbox{$ #1 \atop #2 $}}
\def\decdnar#1{\phantom{\hbox{$\scriptstyle{#1}$}}
\left\downarrow\vbox{\vskip15pt\hbox{$\scriptstyle{#1}$}}\right.}
\def\bbc{{\mathbb C}}
\def\bbp{{\mathbb P}}
\def\bbr{{\mathbb R}}
\def\bbz{{\mathbb Z}}
\def\gra{\alpha}
\def\grg{\gamma}
\def\grk{\kappa}
\def\grl{\lambda}
\def\gro{\omega}
\def\grs{\sigma}
\def\grD{\Delta}
\def\grS{\Sigma}
\def\bfl{{\bf l}}
\def\bfv{{\bf v}}
\def\bfw{{\bf w}}
\def\cala{{\mathcal A}}
\def\calc{{\mathcal C}}
\def\cald{{\mathcal D}}
\def\calf{{\mathcal F}}
\def\cali{{\mathcal I}}
\def\calr{{\mathcal R}}
\def\cals{{\oldmathcal S}}
\def\calw{{\mathcal W}}
\def\la#1{\hbox to #1pc{\leftarrowfill}}
\def\ra#1{\hbox to #1pc{\rightarrowfill}}
\def\gn{{\mathfrak n}}
\def\go{{\mathfrak o}}
\def\gr{{\mathfrak r}}
\def\gt{{\mathfrak t}}
\def\gz{{\mathfrak z}}
\def\gB{{\mathfrak B}}
\def\gC{{\mathfrak C}}
\def\gQ{{\mathfrak Q}}
\def\gR{{\mathfrak R}}
\def\hook{\mathbin{\hbox to 6pt{%
                 \vrule height0.4pt width5pt depth0pt
                 \kern-.4pt
                 \vrule height6pt width0.4pt depth0pt\hss}}}
\def\12{\xi_{k_1,k_2}}
\def\m5{M^5_{k_1,k_2}}
\begin{document}

\title{The Sasaki Join and Admissible K\"ahler Constructions}

\author{Charles P. Boyer and Christina W. T{\o}nnesen-Friedman}\thanks{Both authors were partially supported by grants from the Simons Foundation, CPB by (\#245002) and CWT-F by (\#208799)}
\address{Charles P. Boyer, Department of Mathematics and Statistics,
University of New Mexico, Albuquerque, NM 87131.}
\email{cboyer@math.unm.edu} 
\address{Christina W. T{\o}nnesen-Friedman, Department of Mathematics, Union
College, Schenectady, New York 12308, USA } \email{tonnesec@union.edu}

\keywords{Extremal Sasakian metrics, extremal K\"ahler metrics, constant scalar curvature, join construction}

\subjclass[2000]{Primary: 53D42; Secondary:  53C25}

\maketitle

\markboth{Sasaki Join, Admissible K\"ahler}{Charles P. Boyer and Christina W. T{\o}nnesen-Friedman}


\begin{abstract}
We give a survey of our recent work \cite{BoTo11,BoTo12b,BoTo13,BoTo13b,BoTo14a,BoTo14b} describing a method which combines the Sasaki join construction of \cite{BGO06} with the admissible K\"ahler construction of \cite{ApCaGa06,ACGT04,ACGT08,ACGT08c} to obtain new extremal and new constant scalar curvature Sasaki metrics, including Sasaki-Einstein metrics. The constant scalar curvature Sasaki metrics also provide explicit solutions to the CR Yamabe problem. In this regard we give examples of the lack of uniqueness when the Yamabe invariant $\grl(M)$ is positive.

\end{abstract}

\centerline{Dedicated to our good friend and colleague}
\centerline{Paul Gauduchon on the occasion of his 70th birthday}


\section{Introduction}

It is a distinct honor and privilege  to dedicate this survey of our recent work to our good friend and colleague, Paul Gauduchon. Paul's influence on our work is absolutely apparent and cannot be overestimated. But his influence actually goes much deeper than our recent work, as he has had a very important influence on both of our careers.

In the mid nineteen eighties when the first author was making the transition from mathematical physics to pure mathematics, he was directed to the work of Paul Gauduchon by E. Calabi. Two of Paul's papers, \cite{Gau77a,Gau77b}, subsequently played a seminal role in the first author's work on the geometry of 4-manifolds \cite{Boy86,Boy88,Boy88b}. The first author met Paul Gauduchon in 1990 at the Summer AMS Institute on Differential Geometry in UCLA. A few years later we spent time together at the Erwin Schrodinger Institute in Vienna, and became good friends.

The second author first met Paul Gauduchon during the spring of 2001 at a conference in Denmark. It was then that she first heard about the exciting program of Hamiltonian $2$-forms that put existing K\"ahler metric constructions in a whole new light. Paul generously invited her to spend a week at \'Ecole Polytechique during the summer of 2001 and it was here that her collaboration and friendship with him, Vestislav Apostlov, and David Calderbank had its beginning.  It is impossible to overstate just how much she appreciates having been a part of this program of research and she is acutely aware of the fact that many of her subsequent opportunities in research have been due to this collaboration. It is typical for Paul to reach out to young researchers and help them on their way. He is without a doubt one of the friendliest, most inspiring, and most generous leading differential geometers of our time.

\section{Preliminaries on K\"ahlerian and Sasakian Geometry}

K\"ahlerian and Sasakian geometry are sister geometries; one (K\"ahlerian) lives in even dimensions and the other (Sasakian) lives in odd dimensions. Furthermore, their relation to other important geometries is similar. K\"ahler geometry is a substructure of both complex and symplectic geometry; whereas, Sasakian geometry is a substructure of CR and contact geometry. And both are examples of Riemannian geometry.

A {\it K\"ahler manifold} is a symplectic manifold $(N,\gro)$ which is also a complex manifold such that the complex structure tensor $J$ and symplectic form $\gro$ satisfy the conditions that $\gro(X,JY)$ is positive definite and $\gro(JX,JY)=\gro(X,Y)$ for any vector fields $X,Y$. We also can define a {\it K\"ahler orbifold} for which we refer to Chapter 4 of \cite{BG05} for the precise definition. The symmetric form $\gro(X,JY)$ is a Riemannian metric on $N$ called the {\it K\"ahler metric}.

A {\it Sasaki manifold} is a contact manifold $(M,\eta)$ with contact bundle $\cald=\ker\eta$ which is also a strictly pseudoconvex CR manifold $(\cald,J)$ satisfying $d\eta(JX,JY)=d\eta(X,Y)$ for all sections of $\cald$ and whose Reeb vector field $\xi$ is an infinitesimal CR automorphism. So the Lie algebra of infinitesimal automorphisms of a Sasakian structure is at least one dimensional. If we extend the transverse complex structure $J$ to an endomorphism of the tangent bundle $TM$ by setting $\Phi=J$ on $\cald$ and $\Phi\xi=0$, the symmetric form defined for all vector fields $X,Y$ by $g(X,Y)=d\eta(X,\Phi Y)+\eta(X)\eta(Y)$ is a Riemannian metric on $M$ called the {\it Sasaki metric} for which $\xi$ is Killing vector field. Note that $d\eta\circ (\BOne\times \Phi)$ is a sub-Riemannian K\"ahler metric, called the transverse metric and denoted by $g^T$. It is also called a {\it pseudohermitian metric} in the CR literature. 

When $M$ is a Sasaki manifold and all the Reeb orbits are closed, the Reeb field $\xi$ generates a locally free circle action whose quotient is a K\"ahler orbifold, actually a projective algebraic orbifold. In this case $\xi$ (or the Sasakian structure) is called {\it quasi-regular}. When the action is free $\xi$ is {\it regular} and the quotient is a K\"ahler manifold. This construction, known as the Boothby-Wang construction \cite{BoWa,BG00a}, can be inverted as follows. Given a K\"ahler orbifold with integral cohomology class $[\gro]\in H^2_{orb}(N,\bbz)$, we choose a connection 1-form $\eta$ on the total space $M$ of the $S^1$ orbibundle over $N$. Then $(M,\eta)$ is a Sasaki manifold. Notice that a scale transformation or homothety of the transverse K\"ahler metric, $g^T\mapsto ag^T$ for $a\in\bbr^+$ induces the {\it transverse homothety} $g\mapsto ag+(a^2-a)\eta\otimes\eta$ of the Sasaki metric, giving a one parameter family of new Sasaki metrics.

Even in the {\it irregular} case when there are non-closed Reeb orbits and no reasonable global quotient, the local quotients of the local submersions are K\"ahler. Hence, a Sasaki manifold has a transverse (to the flow of Reeb field) K\"ahlerian structure.

\subsection{Extremal Metrics}
The notion of extremal K\"ahler metrics was introduced as a variational problem by Calabi in \cite{Cal56} and studied in greater depth in \cite{Cal82}. The most effective functional is probably the $L^2$-norm of scalar curvature, viz.
\begin{equation}\label{Calfun}
E(\omega) = \int_M s^2 d\mu,
\end{equation}
where $s$ is the scalar curvature and $d\mu$ is the volume form of the
K\"ahler metric corresponding to the K\"ahler form $\omega$. The variation is taken over the set of all K\"ahler metrics within a fixed K\"ahler class $[\gro]$. Calabi showed that the critical points of the functional $E$ are precisely the K\"ahler metrics such that the gradient vector field $J{\rm grad}~s$ is holomorphic. A recent and detailed account is given in the forthcoming book \cite{Gau09b}.

On the Sasakian level extremal metrics were developed in \cite{BGS06}. The procedure is quite analogous again using the $L^2$-norm of scalar curvature $s_g$ of the Sasaki metric $g$, viz.
\begin{equation}\label{sasfun}
E(g) = \int_M s_g^2 d\mu_g,
\end{equation}
where now the variation is taken over all Sasaki metrics associated to the basic cohomology class $[d\eta_B]\in H^2_B(\calf_\xi)$. Again the critical points are precisely those Sasaki metrics such that the gradient vector field $J{\rm grad}_gs_g$ is transversely holomorphic. Since the scalar curvature $s_g$ is related to the transverse scalar curvature $s^T_g$ of the transverse K\"ahler metric by $s_g=s_g^T-2n$, a Sasaki metric is extremal (CSC) if and only if its transverse K\"ahler metric is extremal (CSC).

\subsection{The Sasaki Cone and Bouquets}
Like the space of K\"ahler metrics belonging to a fixed cohomology class, the space of Sasaki metrics belonging to a fixed isotopy class of contact structure is infinite dimensional \cite{BG05}. However, the space of Sasakian structures belonging to a fixed strictly pseudoconvex CR structure $(\cald,J)$ has finite dimensions. It is called the {\it unreduced Sasaki cone} and is defined as follows. Fix a maximal torus $T$ in the CR automorphism group $\gC\gr(\cald,J)$ of a Sasakian manifold $(M,\eta)$  and let $\gt(\cald,J)$ denote the Lie algebra of $T$. Then the unreduced Sasaki cone is defined by 
\begin{equation}\label{unsascone}
\gt^+(\cald,J)=\{\xi'\in\gt(\cald,J)~|~\eta(\xi')>0\}.
\end{equation}
Then the {\it reduced Sasaki cone} is defined by $\grk(\cald,J)=\gt^+(\cald,J)/\calw(\cald,J)$ where $\calw(\cald,J)$ is the Weyl group of $\gC\gr(\cald,J)$. The reduced Sasaki cone $\grk(\cald,J)$ can be thought of as the moduli space of Sasakian structures whose underlying CR structure is $(\cald,J)$. We shall often suppress the CR notation $(\cald,J)$ when it is understood from the context.

Now for each strictly pseudoconvex CR structure $(\cald,J)$ there is a unique conjugacy class of maximal tori in $\gC\gr(\cald,J)$. This in turn defines a conjugacy class $\calc_T(\cald)$ of tori in the contactomorphism group $\gC\go\gn(\cald)$ which may or may not be maximal. We thus recall \cite{Boy10a} the map $\gQ$ that associates to any transverse almost complex structure $J$ that is compatible with the contact structure $\cald$, a conjugacy class of tori in $\gC\go\gn(\cald)$, namely the unique conjugacy class of maximal tori in $\gC\gR(\cald,J)\subset \gC\go\gn(\cald)$. Then two compatible transverse almost complex structures $J,J'$ are {\it T-equivalent} if $\gQ(J)=\gQ(J')$. A {\it Sasaki bouquet} is defined by
\begin{equation}\label{sasbouq}
\gB_{|\cala|}(\cald)=\bigcup_{\gra\in\cala}\grk(\cald,J_\gra)
\end{equation}
where the union is taken over one representative of each $T$-equivalence class in a preassigned subset $\cala$ of $T$-equivalence classes of transverse (almost) complex structures. Here $|\cala|$ denotes the cardenality of $\cala$.

\section{The Admissible Construction}\label{admissec}

The admissible construction of K\"ahler metrics, which we have utilized to obtain our results, may be viewed as a special case of the construction of K\"ahler metrics admitting a so-called {\it Hamiltonian 2-form}. This term was introduced by V. Apostolov, D. Calderbank, and P. Gauduchon in  \cite{ApCaGa06}. We will here give a very brief overview/history of this topic.

Let $(S,J,\gro,g)$ be a K\"ahler manifold of real dimension $2m$. Recall \cite{ApCaGa06} that on $(S,J,\gro,g)$ a {\it Hamiltonian 2-form} is a $J$-invariant 2-form $\phi$ that satisfies the differential equation
\begin{equation}\label{ham2form}
2\nabla_X\phi = d\tr\phi\wedge (JX)^\flat-d^c\tr\phi\wedge X^\flat
\end{equation}
for any vector field $X$. Here $X^\flat$ indicates the 1-form dual to $X$, and $\tr\phi$ is the trace with respect to the K\"ahler form $\gro$, i.e. $\tr\phi=g(\phi,\gro)$ where $g$ is the K\"ahler metric.
Note that if $\phi$ is a Hamiltonian $2$-form, then so is $\phi_{a,b} = a\phi + b\omega$ for any constants $a,b \in \bbr$.

Hamiltonian $2$-forms occur naturally for {\it Weakly-Bochner-flat} (WBF) {\it K\"ahler metrics} as follows. A WBF K\"ahler metric is defined to be a K\"ahler metric whose Bochner tensor
(which is part of the curvature tensor) is co-closed. By using the differential
Bianchi identity one can see that this condition is equivalent (for $m\geq 2$) to the condition that
$\rho + \frac{s}{2 (m + 1)} \, \omega$ is a Hamiltonian $2$-form, where
$\rho$ is the Ricci form and $s$ is the scalar curvature.  WBF K\"ahler metrics are in particular extremal K\"ahler metrics
and they are generalization of  Bochner-flat K\"ahler metrics, studied by Bryant~\cite{Bry01}, and products
of K\"ahler--Einstein metrics. Note that for $m=2$, WBF K\"ahler metrics and Bochner-flat K\"ahler metrics are called 
{\it Weakly Selfdual K\"ahler metrics} \cite{ApCaGa03} and {\it Selfdual K\"ahler metrics} respectively.

A Hamiltonian $2$-form $\phi$ induces an isometric Hamiltonian $l$-torus action on $S$ for some $0\leq l \leq m$. 
This follows from \cite{ApCaGa06} where the K\"ahler form
 $\omega$ is used to identify $\phi$ with a Hermitian endomorphism.  They then consider  the elementary symmetric functions $\grs_1,\ldots,\grs_n$ of its $n$ eigenvalues. The Hamiltonian vector fields $K_i=J{\rm grad}\grs_i$ are Killing with respect to the K\"ahler 
 metric $g$. Moreover the Poisson brackets $\{\sigma_i,\sigma_j\}$ all vanish and so, in particular the
 vector fields $K_1,...,K_m$ commute. In the case where $K_1,...,K_m$ are independent, we have that $(S,J,\gro,g)$ is toric. In fact, it is a very special kind of toric, namely {\em orthotoric} \cite{ApCaGa06}. In general, it is proved in  \cite{ApCaGa06} that there exists a number
 $0 \leq l \leq m$ such that  the span of $K_1,...,K_m$ is everywhere at most $l-dimensional$ and, on an open dense set $S^0$, $K_1,...,K_l$ are linearly independent.
 Now $l$ is called the {\it order} of $\phi$.  

Using the Pedersen-Poon ansatz  for K\"ahler metrics with a local isometric hamitonian $l$-torus action \cite{PePo91},  a local classification of K\"ahler metrics admitting a Hamiltonian $2$-form was then obtained in \cite{ApCaGa06}. This local classification then gave a specialized ansatz that e.g would simplify the PDE system equivalent to the  extremal K\"ahler metric condition to a much more amenable ODE system. Previously this specialized ansatz had been successfully assumed by many authors (\cite{Cal82}, \cite{KoSa88}, \cite{PePo91}, \cite{LeB91b},\cite{Sim92}, \cite{Hwa94}, \cite{Gua95}, \cite{To-Fr98}, and \cite{HwaSi02}, to name a few - with apologies to anyone we left out)  in order to produce examples of
 K\"ahler  metrics with special geometric properties. However, to see that this assumption was naturally given in the form of the existence of a  Hamiltonian $2$-form, was indeed a spectacular observation in \cite{ApCaGa06}. In particular, as a consequence it became clear that this specialization was not only useful, but also necessary in the case of WBF metric constructions.
 
The natural continuation of \cite{ApCaGa06} was to move from a local to a global classification of K\"ahler metrics admitting a Hamiltonian $2$-form. This was done in \cite{ACGT04}, and hence, a blueprint for the construction of not only local but global (compact) examples of K\"ahler metrics with various geometric properties was established.

\subsection{K\"ahler Admissibility and Ruled Manifolds}\label{adsec}
We will focus on a special case in the $l=1$ case, which according to \cite{ACGT08} would be called something like
 {\it admissible with no blow-downs and only one piece in the base}. For simplicity we will abuse this terminology a bit and simply refer to our case as {\it admissible}. 
 
Assume that $n\in \bbz\setminus\{0\}$ and $(N,\gro_N,g_N)$  is a compact K\"ahler structure with CSC K\"ahler metric $g_N$.
Then 
$(\omega_{N_n},g_{N_n}): =(2n\pi \omega_N, 2n\pi g_N)$ satisfies that 
$( g_{N_n}, 
\omega_{N_n})$ or $(- g_{N_n}, 
-\omega_{N_n})$
is a K\"ahler structure (depending on the sign of $n$). 
In either case, we let $(\pm g_{N_n}, \pm \omega_{N_n})$ refer to the K\"ahler structure.
We denote the real dimension of $N$ by
$2 d_{N}$ and write the scalar curvature of $\pm g_{N_n}$ as $\pm 2 d_{N_n} s_{N_n}$.
[So, if e.g. $-g_{N_n}$ is a K\"ahler structure with positive scalar curvature, $s_{N_n}$ would be negative.]

Now for a holomorphic line bundle $L_n \rightarrow N$ such that
$c_{1}(L_n)= [\omega_{N_n}/2\pi]$.
the total space of the projectivization
$S_n=\bbp(\BOne\oplus L_n)$ is called {\it admissible}.
On these manifolds, K\"ahler metrics admitting a Hamiltonian $2$-form of order one can be constructed in such a way that the natural fiberwise $S^1$-action is induced by the Hamiltonian vector field arising from the Hamiltonian $2$-form \cite{ACGT08}. These metrics, as well as the K\"ahler classes of their K\"ahler forms, are also called {\it admissible}. 

In the case where $N$ is a Riemann surface, the entire K\"ahler cone consist of admissible K\"ahler classes but in general the set of admissible K\"ahler classes constitutes a subcone of the K\"ahler cone. Up to scale, an admissible K\"ahler class $\Omega_{\mathbf r}$ is determined by a real number $r$ of the same sign as
$g_{N_n}$ and satisfying $0 < |r| < 1$. More specifically
\begin{equation}\label{admKahclass}
 \Omega_{\mathbf r}  = [\omega_{N_n}]/r + 2 \pi PD[D_1+D_2],
 \end{equation}
where $PD$ denotes the Poincar\'e dual and the divisors $D_1$ and $D_2$ are given by the zero section $\BOne\oplus 0$ and infinity section $0\oplus L_n$, respectively. Consider the circle action
on $S_n$ induced by the natural circle action on $L_n$. It extends to a holomorphic
$\mathbb{C}^*$ action. The open and dense set ${S_n}_0\subset S_n$ of stable points with respect to the
latter action has the structure of a principal circle bundle over the stable quotient.
The Hermitian norm on the fibers induces, via a Legendre transform, a function
$\gz:{S_n}_0\rightarrow (-1,1)$ whose extension to $S_n$ consists of the critical manifolds
$\gz^{-1}(1)=P(\BOne\oplus 0)$ and $\gz^{-1}(-1)=P(0 \oplus L_n)$.
Letting $\theta$ be a connection one form for the Hermitian metric on ${S_n}_0$, with curvature
$d\theta = \omega_{N_n}$, an admissible K\"ahler metric and form are
given up to scale by the respective formulas
\begin{equation}\label{g}
g=\frac{1+r\gz}{r}g_{N_n}+\frac {d\gz^2}
{\Theta (\gz)}+\Theta (\gz)\theta^2,\quad
\omega = \frac{1+r\gz}{r}\omega_{N_n} + d\gz \wedge
\theta,
\end{equation}
valid on ${S_n}_0$. Here $\Theta$ is a smooth function with domain containing
$(-1,1)$ and $r$, is a real number of the same sign as
$g_{N_n}$ and satisfying $0 < |r| < 1$. The function $\gz$ is the moment 
map on $S_n$ for the circle action, decomposing $S_n$ into 
the free orbits ${S_n}_0 = \gz^{-1}((-1,1))$ and the special orbits 
$D_1= \gz^{-1}(1)$ and $D_2=\gz^{-1}(-1)$. Here generally the ruled manifold $S_n$ has an added orbifold structure given by the log pair $(S_n,\grD)$ where $\grD$ is the branch divisor
\begin{equation}\label{branchdiv2}
\grD=(1-\frac{1}{m_1})D_1+ (1-\frac{1}{m_2})D_2,
\end{equation}
with ramification indices $m_1,m_2$. Moreover, the function $\Theta$ must satisfy certain boundary conditions.
This data associated with $(S_n,\grD)$ will be called {\it admissible data}, and the function $\Theta$ is called an {\it admissible momentum profile}. We summarize by


\begin{proposition}\label{adKahprop}
Given admissible data any choice of smooth function $\Theta(\gz)$ such that
\[
\begin{array}{l}
\Theta(\gz) > 0, \quad -1 < \gz <1,\\
\\
 \Theta(\pm 1) = 0,\\
 \\
 \Theta'(-1) = 2/m_2\quad \quad \Theta'(1)=-2/m_1,
 \end{array}
\]
determines an admissible metric in the corresponding admissible K\"ahler class.
\end{proposition}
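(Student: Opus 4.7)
My plan is to verify three things: (i) on the open dense set ${S_n}_0=\gz^{-1}((-1,1))$ the formulas \eqref{g} define a smooth K\"ahler metric, (ii) this extends across the critical divisors $D_1=\gz^{-1}(1)$ and $D_2=\gz^{-1}(-1)$ as a K\"ahler orbifold metric on $(S_n,\grD)$ with the prescribed ramification indices $m_1,m_2$, and (iii) its de Rham class equals $\grO_{\bfr}$. For (i), on $-1<\gz<1$ the hypothesis $\Theta(\gz)>0$ together with $|r|<1$ and the matching signs of $r$ and $g_{N_n}$ forces each of the three orthogonal summands in \eqref{g} to be positive definite, so $g$ is Riemannian; the form $\gro$ is closed because $d\theta=\gro_{N_n}$ gives
\begin{equation*}
d\gro = d\gz\wedge\gro_{N_n}-d\gz\wedge d\theta = d\gz\wedge\gro_{N_n}-d\gz\wedge\gro_{N_n}=0,
\end{equation*}
and the $J$-compatibility $\gro(\cdot,J\cdot)=g(\cdot,\cdot)$ is built into the Pedersen--Poon ansatz \cite{PePo91} that generates these formulas on $S_n$.

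Step (ii) is the heart of the matter. Near $D_1$ I would introduce the radial variable $t=\int_\gz^1 d\grs/\sqrt{\Theta(\grs)}$. The boundary data $\Theta(1)=0$ and $\Theta'(1)=-2/m_1$ produce the Taylor expansion $\Theta(\gz)=\tfrac{2}{m_1}(1-\gz)+O((1-\gz)^2)$, whose inversion yields $1-\gz=\tfrac{t^2}{2m_1}+O(t^4)$ and $\Theta(\gz)=(t/m_1)^2+O(t^4)$. Consequently
\begin{equation*}
\frac{d\gz^2}{\Theta(\gz)}+\Theta(\gz)\,\theta^2 = dt^2+\left(\frac{t}{m_1}\right)^{2}\theta^2+O(t^4),
\end{equation*}
which is the standard flat cone of opening $2\pi/m_1$ on a disc transverse to $D_1$ (recall $\theta$ has period $2\pi$). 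This extends across $t=0$ as a K\"ahler orbifold metric with ramification $m_1$, becoming smooth in the classical sense precisely when $m_1=1$. The horizontal factor $(1+r\gz)/r\cdot g_{N_n}$ is smooth in $t$ because $\gz$ is. An identical analysis at $D_2$, using $\Theta'(-1)=2/m_2$, completes step (ii).

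For step (iii), since ${S_n}_0$ has full measure it suffices to test $[\gro]$ on the fiber class $F\simeq\bbp^1$ and on the sections $D_1,D_2$. The fibrewise integral is $\int_F\gro=\int_{-1}^1 d\gz\int_{S^1}\theta=4\pi$, which equals the pairing of $2\pi\,PD[D_1+D_2]$ with $F$. The restrictions $\gro|_{D_1}=\tfrac{1+r}{r}\gro_{N_n}$ and $\gro|_{D_2}=\tfrac{1-r}{r}\gro_{N_n}$ match the restriction of $[\gro_{N_n}]/r+2\pi\,PD[D_1+D_2]$ once one uses the normal-bundle identities $PD[D_1]|_{D_1}=c_1(L_n)=[\gro_{N_n}/(2\pi)]$ and $PD[D_2]|_{D_2}=-c_1(L_n)$. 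Hence $[\gro]=\grO_{\bfr}$. The hard part is clearly step (ii): the entire content of the hypothesis on $\Theta$ is consumed there, and one must verify that the two apparently singular summands $d\gz^2/\Theta$ and $\Theta\,\theta^2$ conspire, under precisely these boundary conditions, to fill in as a genuine orbifold cone around each section rather than create a metric singularity.
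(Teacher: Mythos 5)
Your outline is correct, and it is essentially the standard argument. Note that the paper itself offers no proof of Proposition \ref{adKahprop}: it is stated as a summary of the admissible/momentum construction and is in effect quoted from \cite{ACGT08} (with the boundary analysis going back to \cite{HwaSi02} and \cite{ApCaGa06}), so the only meaningful comparison is with that literature, and your three steps (positivity and closedness on ${S_n}_0$, extension across $\gz^{-1}(\pm1)$, identification of the class) are exactly how those references proceed. Two places where your sketch is thinner than the actual proof deserve mention. First, in step (i), ``K\"ahler'' requires more than $g>0$ and $d\gro=0$: one must check that the almost complex structure implicit in \eqref{g} (the horizontal lift of $J_N$ together with $J\,d\gz=\Theta\,\theta$ on the vertical part) is the \emph{fixed} integrable complex structure of $S_n$, independent of $\Theta$; this is the content of the Legendre-transform/momentum ansatz you invoke, and it is what guarantees that all these metrics live on the same complex manifold and can therefore be compared in cohomology at all. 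Second, in step (ii) your Taylor expansion establishes the cone angle $2\pi/m_1$ only to leading order; the assertion that the metric extends as a smooth orbifold metric (rather than merely a $C^0$ cone) uses that $\Theta$ is smooth on the closed interval $[-1,1]$ together with precisely the endpoint conditions $\Theta(\pm1)=0$, $\Theta'(\pm1)=\mp 2/m_i$ --- this Hwang--Singer-type compactification lemma is the one genuinely nontrivial ingredient, and a complete write-up would have to carry the expansion beyond leading order (or argue via the fiberwise Hermitian metric on $L_n$). Your step (iii) is fine: evaluating on the fiber and restricting to $D_1$, using $PD[D_1]|_{D_1}=c_1(L_n)=[\gro_{N_n}/2\pi]$ and $D_1\cap D_2=\emptyset$, determines a class in $H^2(S_n,\bbr)\cong\pi^*H^2(N,\bbr)\oplus\bbr$, and it lands on $\Omega_{\mathbf r}$ as written in \eqref{admKahclass}.
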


As follows from \cite{ApCaGa06} we now have the following useful proposition.
\begin{proposition}\label{extremalcond}
Given admissible data and a smooth function $\Theta(\gz)=F(\gz)/(1 + r \gz)^{d_{N}}$ satisfying the conditions of Proposition \ref{adKahprop},
the admissible metric associated with $\Theta(\gz)$ is extremal if and only if
\begin{equation}\label{extremalode}
F''(\gz) = (1+r \gz)^{d_N-1}(2d_Ns_{N_n} r + (\alpha\gz +\beta)(1+r\gz)),
\end{equation}
where $\alpha$ and $\beta$ are constants. 
\end{proposition}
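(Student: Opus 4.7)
The plan is to compute the scalar curvature $s_g$ of the admissible metric \eqref{g} as an explicit function of $\zeta$ (and the momentum profile $\Theta$), then translate the extremality condition into an ODE via the observation that on an $S^1$-invariant K\"ahler surface the scalar curvature must be an affine function of the Killing potential.

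First I would use the standard Calabi/ACG-type formula for the scalar curvature of a metric of the warped-product form \eqref{g}. Since $g_{N_n}$ is CSC with scalar curvature $2d_N s_{N_n}$ and the factor $(1+r\zeta)/r$ rescales the base, the scalar curvature splits into a base contribution and a fiber contribution depending only on $\zeta$. A direct computation (as in \cite{ACGT08}) yields
\begin{equation*}
s_g \;=\; \frac{2d_N s_{N_n}}{(1+r\zeta)/r} \;-\; \frac{1}{(1+r\zeta)^{d_N}}\,\frac{d^{\,2}}{d\zeta^{2}}\!\Bigl[(1+r\zeta)^{d_N}\,\Theta(\zeta)\Bigr].
\end{equation*}
Writing $\Theta(\zeta)=F(\zeta)/(1+r\zeta)^{d_N}$, the bracketed term is simply $F(\zeta)$, so
\begin{equation*}
s_g \;=\; \frac{2d_N s_{N_n}\,r}{1+r\zeta} \;-\; \frac{F''(\zeta)}{(1+r\zeta)^{d_N}}.
\end{equation*}

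Next I would invoke the Calabi characterization of extremality: a K\"ahler metric $g$ is extremal iff $J\,\mathrm{grad}_g s_g$ is a (real) holomorphic vector field. Since the admissible metric is invariant under the fiberwise $S^1$-action whose momentum map is $\zeta$, the scalar curvature depends only on $\zeta$. The vector field $J\,\mathrm{grad}_g s_g$ is then proportional to the Killing field generating the $S^1$-action, and standard arguments (see e.g.\ \cite{ACGT08}) show that this is holomorphic if and only if $s_g$ is affine linear in $\zeta$, i.e.\
\begin{equation*}
s_g \;=\; -(\alpha\zeta+\beta)
\end{equation*}
for some constants $\alpha,\beta\in\mathbb{R}$ (the sign is absorbed into the choice of constants).

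Finally I would equate the two expressions for $s_g$, multiply through by $(1+r\zeta)^{d_N}$, and rearrange to obtain
\begin{equation*}
F''(\zeta) \;=\; 2d_N s_{N_n}\,r\,(1+r\zeta)^{d_N-1} \;+\; (\alpha\zeta+\beta)(1+r\zeta)^{d_N},
\end{equation*}
which is exactly \eqref{extremalode} after factoring $(1+r\zeta)^{d_N-1}$. The main obstacle is verifying the scalar curvature formula with correct constants: one must carefully track the normalizations arising from the rescaling $\omega_{N_n}=2n\pi\omega_N$, the dependence on $r$ coming from the base factor $(1+r\zeta)/r$, and the precise sign conventions for $d_{N_n}s_{N_n}$ when $n<0$. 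Once the formula is established, the equivalence of extremality with \eqref{extremalode} is a direct rearrangement; the boundary conditions of Proposition~\ref{adKahprop} enter only to guarantee that $\Theta$ (and hence $F$) actually defines a smooth metric and play no role in the ODE itself.
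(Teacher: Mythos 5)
Your proposal is correct and follows essentially the route the paper relies on: the paper gives no independent proof but defers to \cite{ApCaGa06}, where precisely this computation appears --- the scalar curvature formula $s_g = \frac{2d_N s_{N_n} r}{1+r\gz} - \frac{F''(\gz)}{(1+r\gz)^{d_N}}$ combined with the fact that extremality for these $S^1$-invariant metrics is equivalent to $s_g$ being affine in the momentum $\gz$. Your sign bookkeeping ($s_g=-(\alpha\gz+\beta)$) reproduces \eqref{extremalode} exactly, so there is nothing to add.
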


The ODE \eqref{extremalode} together with the endpoint conditions of Proposition \ref{adKahprop} determines a unique solution
$F_{\mathbf r}(\gz)$. We call this the {\it extremal polynomial} for $ \Omega_{\mathbf r} $. 
The positivity condition of Proposition \ref{adKahprop}  follows automatically if we assume that $(N,\gro_N,g_N)$ has non-negative scalar curvature, but in general it may or may not hold.

In the trivial orbifold case, the following stronger theorem has been established:

\begin{theorem}\cite{ACGT08}
Let $S_n$
  be any admissible manifold and  $\Omega_{\mathbf r} $ be
  any admissible K\"ahler class on $M$. Then, $ \Omega_{\mathbf r} $ contains an extremal K\"ahler metric
   if and only if the extremal polynomial $F_{\mathbf r}(\gz)$ is
  {\rm(}strictly{\rm)} positive on ${\rm (-1, 1)}$.  In that case, there is an admissible extremal K\"ahler metric in $ \Omega_{\mathbf r} $. 
 \end{theorem}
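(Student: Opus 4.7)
The plan is to reduce the existence of an extremal K\"ahler metric in $\Omega_{\mathbf r}$ to the analytic positivity of the polynomial $F_{\mathbf r}$ by invoking Propositions \ref{adKahprop} and \ref{extremalcond}, together with an equivariant uniqueness argument that forces any extremal representative to be admissible.

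For the sufficient direction I would assume $F_{\mathbf r}(\gz) > 0$ on $(-1,1)$ and define $\Theta_{\mathbf r}(\gz) = F_{\mathbf r}(\gz)/(1+r\gz)^{d_N}$. Because $|r| < 1$, the factor $1+r\gz$ is strictly positive on $[-1,1]$, so $\Theta_{\mathbf r} > 0$ on $(-1,1)$. The endpoint conditions $\Theta_{\mathbf r}(\pm 1) = 0$ and $\Theta'_{\mathbf r}(\pm 1) = \mp 2$ (the trivial-orbifold case $m_1 = m_2 = 1$) were precisely those imposed when solving the ODE \eqref{extremalode} to define $F_{\mathbf r}$, so they match automatically. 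Proposition \ref{adKahprop} then produces an admissible K\"ahler metric in $\Omega_{\mathbf r}$, and Proposition \ref{extremalcond} certifies its extremality.

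For the necessary direction I would start from any extremal metric $g \in \Omega_{\mathbf r}$ and argue that $g$ can be assumed admissible. The plan rests on three inputs: (i) the fiberwise $S^1$-action on $S_n$ extends to a holomorphic $\mathbb{C}^*$-action that lies in a maximal torus of $\mathrm{Aut}_0(S_n)$; (ii) Calabi's theorem, by which the identity component of the isometry group of an extremal K\"ahler metric is a maximal compact subgroup of the reduced automorphism group; and (iii) uniqueness of extremal K\"ahler metrics in a fixed K\"ahler class modulo $\mathrm{Aut}_0(S_n)$, as supplied by the Bando--Mabuchi-type rigidity available in this setting. Pulling $g$ back by a suitable automorphism makes it invariant under the fiberwise torus, and combined with the Hamiltonian 2-form structure carried by admissible classes this forces $g$ into the shape \eqref{g} for some momentum profile $\Theta$. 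Proposition \ref{extremalcond} then identifies $(1+r\gz)^{d_N}\Theta(\gz)$ with the unique solution $F_{\mathbf r}$ of the ODE with the prescribed boundary data, and the positivity of $\Theta$ required by Proposition \ref{adKahprop} translates into strict positivity of $F_{\mathbf r}$ on $(-1,1)$.

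The main obstacle is step (iii) of the necessary direction: the equivariant rigidity that funnels an arbitrary extremal representative into admissible form. Once that reduction is in hand, the remaining ODE/boundary matching is a routine algebraic check, and the $|r|<1$ hypothesis makes the translation between positivity of $\Theta$ and positivity of $F_{\mathbf r}$ immediate.
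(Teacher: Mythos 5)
This theorem is quoted from \cite{ACGT08}; the survey itself gives no proof, but it does record (in the paragraph immediately following the statement) that the `if' direction is the elementary one and that the `only if' direction rests on the Chen--Tian uniqueness theorem \cite{ChTi05}. Your sufficient direction is correct and is exactly the intended argument: $F_{\mathbf r}>0$ on $(-1,1)$ plus $|r|<1$ gives $\Theta_{\mathbf r}=F_{\mathbf r}/(1+r\gz)^{d_N}>0$ there, the endpoint conditions are built into the definition of $F_{\mathbf r}$, and Propositions \ref{adKahprop} and \ref{extremalcond} produce an admissible extremal metric in $\Omega_{\mathbf r}$.

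The necessary direction as you have written it has a genuine gap at the step ``invariant under the fiberwise torus $\Rightarrow$ of the shape \eqref{g}.'' A K\"ahler class does not ``carry a Hamiltonian $2$-form''; individual metrics do or do not admit one, and a torus-invariant K\"ahler metric in an admissible class need not admit a Hamiltonian $2$-form of order one, hence need not be admissible. Averaging and Calabi's theorem only get you torus invariance, which is strictly weaker than admissibility (the admissible ansatz also fixes the horizontal part of the metric to be the pullback of $g_{N_n}$ scaled by an affine function of the moment map). There is also a circularity lurking in your appeal to uniqueness: to conclude via Chen--Tian that the given extremal metric coincides (mod $\mathrm{Aut}_0$) with an admissible one, you must already have an admissible extremal metric in $\Omega_{\mathbf r}$ in hand --- and producing one is precisely equivalent to the positivity of $F_{\mathbf r}$ that you are trying to deduce. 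The actual argument of \cite{ACGT08} avoids this by a connectedness argument in the cone of admissible classes: the set of classes admitting extremal metrics is open (LeBrun--Simanca deformation theory), the set where $F_{\mathbf r}>0$ is open and nonempty, Chen--Tian uniqueness identifies the two kinds of extremal metrics wherever both exist, and a careful analysis of how $F_{\mathbf r}$ first acquires a (necessarily double) interior root on the boundary of the positivity locus --- using the normalization $F_{\mathbf r}(\pm1)=0$, $F_{\mathbf r}'(\mp1)=\pm2$ and the resulting degeneration of the admissible metrics --- yields the contradiction. So your identification of the key external inputs is right, but the reduction from ``extremal'' to ``admissible extremal'' needs this continuity/uniqueness mechanism, not merely torus invariance; this is also why, as the survey notes, the `only if' direction remains open for nontrivial orbifold structures $(m_1,m_2)\neq(1,1)$, where \cite{ChTi05} is not available.
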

 Unfortunately we are still lacking a generalization of this theorem to the orbifold case where
 $(m_1,m_2) \neq (1,1)$. Of course, the `if' part holds in the orbifold case as well; however, in the manifold case the `only if' part relies on the uniqueness theorem of Chen and Tian \cite{ChTi05} which is still unknown for orbifolds.

Notice from \eqref{extremalode} that the degree of $F_{\mathbf r}(\gz)$ is at most $d_N+3$.  In fact, if the degree of $F_{\mathbf r}(\gz)$ is $d_N+2$ or less, then $F_{\mathbf r}(\gz)$ is automatically
  {\rm(}strictly{\rm)} positive on ${\rm (-1, 1)}$ and from \cite{ApCaGa06} we have that $F_{\mathbf r}(\gz)$ defines an admissible CSC K\"ahler metric in 
  $ \Omega_{\mathbf r}$. This results in the following convenient fact

\begin{proposition}\cite{BoTo14a}
The existence of an admissible CSC K\"ahler metric in $\Omega_{\mathbf r}$ on the log pair $(S_n,\Delta)$ is
equivalent to
\begin{equation}\label{candkeqn}
\frac{2s_{N_n}\left( (1+r)^{d_N+1} - (1-r)^{d_N+1}\right)}{r(d_N+1)} - \frac{k\left( (1+r)^{d_N+2} - (1-r)^{d_N+2}\right)}{r^2(d_N+1)(d_N+2)} + 2c=0,
\end{equation}
where
\begin{equation}\label{whatcis}
c=  \frac{2 \left(1-r^2\right)^{d_{N}} (m_2 (1-r)+m_1 (1+r) -  2m_1 m_2 s_{N_n})}{m_1 m_2 \left((1+r)^{d_{N}+1}-(1-r)^{d_{N}+1}\right)}
\end{equation}
and 
\begin{equation}\label{whatkis}
k= \frac{2 (d_N+1) r \left(m_2 (1+r)^{d_N} (1+m_1 s_{N_n})-m_1 (1-r)^{d_N} (-1+m_2 s_{N_n})\right)}{m_1m_2 \left((1+r)^{d_N+1}-(1-r)^{d_N+1}\right)}.
\end{equation}
 \end{proposition}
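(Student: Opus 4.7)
The plan is to exploit the remark immediately preceding the statement, which asserts that an admissible CSC K\"ahler metric in $\Omega_{\mathbf r}$ corresponds precisely to a solution of \eqref{extremalode} whose degree is at most $d_N+2$. Inspecting \eqref{extremalode}, the leading coefficient of $F''(\zeta)$ is $\alpha\, r^{d_N}$, so the degree of $F$ drops from $d_N+3$ to $d_N+2$ exactly when $\alpha=0$. Setting $\alpha=0$ and integrating twice yields
\begin{equation*}
F(\zeta) = \frac{2 s_{N_n}(1+r\zeta)^{d_N+1}}{r(d_N+1)} + \frac{\beta(1+r\zeta)^{d_N+2}}{r^2(d_N+1)(d_N+2)} + A\zeta + B,
\end{equation*}
with integration constants $A,B$, and with positivity of $F$ on $(-1,1)$ automatic by the cited observation from \cite{ApCaGa06}.

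Next I translate the endpoint conditions of Proposition~\ref{adKahprop} into conditions on $F$. Writing $\Theta(\zeta)=F(\zeta)/(1+r\zeta)^{d_N}$, the vanishing $\Theta(\pm 1)=0$ is equivalent to $F(\pm 1)=0$ (since $1\pm r\ne 0$), and differentiating and using $F(\pm 1)=0$ turns the slope conditions into $F'(1)=-2(1+r)^{d_N}/m_1$ and $F'(-1)=2(1-r)^{d_N}/m_2$. This is four linear equations in three unknowns $(\beta,A,B)$, so existence is governed by exactly one compatibility condition. A convenient decoupling is: the two $F'$-equations determine $\beta$ and $A$ (since they involve neither $B$ nor each other's endpoint data); $F(1)+F(-1)=0$ then determines $B$; and $F(1)-F(-1)=0$ is independent of $B$ and is the sought-after compatibility condition.

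Carrying this out, $F'(1)-F'(-1)$ cancels $A$ and yields a linear equation for $\beta$ whose solution, after elementary simplification, is $\beta=-k$ with $k$ given by \eqref{whatkis}. Back-substitution of $\beta$ into the $F'(1)$ equation then solves $A$ in closed form; factoring the resulting expression via $(1+r)^{d_N}(1-r)^{d_N}=(1-r^2)^{d_N}$ produces $A=c$ with $c$ given by \eqref{whatcis}. Substituting the explicit $F$ into $F(1)-F(-1)=0$ gives
\begin{equation*}
\frac{2 s_{N_n}\!\left((1+r)^{d_N+1}-(1-r)^{d_N+1}\right)}{r(d_N+1)} + \frac{\beta\!\left((1+r)^{d_N+2}-(1-r)^{d_N+2}\right)}{r^2(d_N+1)(d_N+2)} + 2A = 0,
\end{equation*}
which under the substitutions $\beta=-k$, $A=c$ is exactly \eqref{candkeqn}.

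The main obstacle is purely algebraic: identifying the closed-form expressions for $\beta$ and $A$ with the specific formulas \eqref{whatkis} and \eqref{whatcis}, which requires careful manipulation of sums and differences of $(1\pm r)^{d_N}$, $(1\pm r)^{d_N+1}$, and $(1\pm r)^{d_N+2}$, together with attention to the sign conventions built into the admissible data (the $\pm g_{N_n}$ choice underlying $s_{N_n}$). No further geometric input is needed once the degree reduction and boundary translation have been made, because the positivity of $F$ on $(-1,1)$, and hence the smoothness and positivity of $\Theta$, are automatic in the degree $\le d_N+2$ regime.
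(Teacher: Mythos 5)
Your strategy is exactly the intended one: the proposition in \cite{BoTo14a} is obtained precisely by setting $\alpha=0$ in \eqref{extremalode} (the degree drop from $d_N+3$ to $d_N+2$), integrating twice, translating the endpoint conditions of Proposition \ref{adKahprop} into $F(\pm 1)=0$, $F'(1)=-2(1+r)^{d_N}/m_1$, $F'(-1)=2(1-r)^{d_N}/m_2$, and reading off the single compatibility condition of the overdetermined linear system as $F(1)-F(-1)=0$. Your decoupling of the system is sound (the two $F'$-equations are nonsingular in $(\beta,A)$ because $(1+r)^{d_N+1}\neq(1-r)^{d_N+1}$ for $r\neq 0$), and the identification $\beta=-k$ with \eqref{whatkis} checks out.

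The one genuine gap is the step you assert without computing: ``factoring \ldots produces $A=c$ with $c$ given by \eqref{whatcis}.'' Carrying out the elimination actually gives
\[
A=\frac{2\left(1-r^2\right)^{d_N}\bigl(m_2(1-r)+m_1(1+r)-2r\,m_1m_2\,s_{N_n}\bigr)}{m_1m_2\bigl((1+r)^{d_N+1}-(1-r)^{d_N+1}\bigr)},
\]
i.e.\ the $s_{N_n}$ term carries a factor of $r$ that is absent from \eqref{whatcis} as printed. A decisive check: take $d_N=1$, $m_1=m_2=1$. The four endpoint conditions alone determine the cubic uniquely as $F(\zeta)=(1-\zeta^2)(1+r\zeta)$, and $F''=-2-6r\zeta$ has the form $2s_{N_n}r+\beta(1+r\zeta)$ iff $\beta=-6$ and $rs_{N_n}=2$; so the CSC condition in this case is $rs_{N_n}=2$. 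Equation \eqref{candkeqn} reproduces this with $c=(1-r^2)(1-rs_{N_n})/r$ (the value of $A$ above) but reduces to $(r-1)^2(r+1)=0$ --- no solution with $0<|r|<1$ --- if one inserts the printed $c=(1-r^2)(1-s_{N_n})/r$. So within the conventions of Propositions \ref{adKahprop} and \ref{extremalcond}, either \eqref{whatcis} has a dropped factor of $r$ in this survey's transcription or there is a normalization you have not accounted for; in either case the assertion $A=c$ is exactly the point at which the claimed formulas must be certified, and your proposal does not do so.
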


Likewise, using the K\"ahler Einstein criterion from \cite{ApCaGa06}, one can establish the following.
\begin{proposition}\cite{BoTo13b}
The existence of an admissible K\"ahler Einstein metric in $\Omega_{\mathbf r}$ on the log pair $(S_n,\Delta)$ is
equivalent to $(N,\gro_N,g_N)$ being positive K\"ahler Einstein with Fano index $\cali_N$
(hence $s_{N_n}=\cali_N/n$),
\begin{equation}\label{fanoclass}
2r\cali_N/n = (1+r)/m_2 + (1-r)/m_1,
\end{equation}
and 
\begin{equation}\label{KEintegral}
\int_{-1}^1 \left((1-\gz)/m_2 -(1+\gz)/m_1\right) (1 + r \gz)^{d_{N}} d\gz = 0.
\end{equation}
\end{proposition}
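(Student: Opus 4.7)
My plan is to apply the general K\"ahler--Einstein criterion for K\"ahler metrics admitting a Hamiltonian $2$-form of order one from \cite{ApCaGa06}, specialized to the admissible setup of Section \ref{adsec}. The task is to convert the Einstein equation $\rho = \lambda\omega$ for the admissible metric \eqref{g} into the three conditions stated: one on the base, one algebraic in $r$, and one integral.

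First I would expand the Ricci form $\rho$ of the metric \eqref{g} in the Pedersen--Poon ansatz used for the local classification in \cite{ApCaGa06}. Because \eqref{g} is an $\omega_{N_n}$-twisted warped product with fiber coordinate $\gz$, its Ricci form splits into a horizontal piece pulled back from $N$ (with a correction proportional to $\omega_{N_n}$ depending on $\Theta$ and $(1+r\gz)$) and a vertical piece involving $\Theta,\Theta',\Theta''$. Imposing $\rho = \lambda\omega$ on the horizontal component forces $\rho_N$ to be pointwise proportional to $\omega_N$, so $(N,\omega_N,g_N)$ must itself be K\"ahler--Einstein (and positive once the sign matches admissibility, via positivity of $\lambda$). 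Matching normalizations with $c_1(L_n) = [\omega_{N_n}/2\pi]$ and the Fano-index identity $c_1(N) = \cali_N [\omega_N]/(2\pi)$ fixes the Einstein constant on $N$ and yields $s_{N_n} = \cali_N/n$.

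Next, the vertical part of $\rho = \lambda\omega$ becomes an ODE for $\Theta$ which is a specialization of the extremal ODE \eqref{extremalode}, but with the constants $\alpha,\beta$ pinned down by the base Einstein equation; equivalently, the extremal polynomial $F_{\mathbf r}(\gz) = (1+r\gz)^{d_N}\Theta(\gz)$ now has one additional linear relation among its coefficients enforced by the Einstein constraint. Imposing the four boundary conditions of Proposition \ref{adKahprop}, namely $\Theta(\pm 1)=0$ and $\Theta'(\pm 1)= \mp 2/m_{1,2}$, produces two independent algebraic compatibility relations. A direct rearrangement of the boundary values at $\gz=\pm 1$ (using $s_{N_n}=\cali_N/n$) gives \eqref{fanoclass}, while a single integration of the Einstein ODE over $[-1,1]$, with integration by parts to absorb the boundary contributions, delivers \eqref{KEintegral}.

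Conversely, if $(N,\omega_N,g_N)$ is positive K\"ahler--Einstein with $s_{N_n}=\cali_N/n$ and $r$ satisfies \eqref{fanoclass} and \eqref{KEintegral}, the Einstein ODE admits a polynomial solution $F_{\mathbf r}$ of degree at most $d_N+2$ meeting all four boundary conditions; by the same reasoning invoked for the CSC case after \eqref{candkeqn}, such an $F_{\mathbf r}$ is automatically strictly positive on $(-1,1)$, so the associated $\Theta$ defines an admissible metric via Proposition \ref{adKahprop}, and by construction it satisfies $\rho=\lambda\omega$. The main obstacle I anticipate is bookkeeping: keeping track of the normalization factors $n$, $2\pi$ and $d_N$ arising from $\omega_{N_n}=2n\pi\omega_N$ so that \eqref{fanoclass} and \eqref{KEintegral} emerge with the stated coefficients rather than merely up to proportionality, and verifying that the order-one Hamiltonian $2$-form criterion of \cite{ApCaGa06} extends without modification to the log pair $(S_n,\Delta)$ with ramification indices $(m_1,m_2)$.
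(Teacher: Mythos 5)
Your proposal is correct and follows essentially the same route as the paper, which simply invokes the K\"ahler--Einstein criterion of \cite{ApCaGa06} for admissible metrics (with details in \cite{BoTo13b}): the horizontal/vertical split of $\rho=\lambda\omega$ forces $N$ to be positive K\"ahler--Einstein and makes $F'/(1+r\gz)^{d_N}$ affine in $\gz$, the endpoint conditions of Proposition \ref{adKahprop} then yield \eqref{fanoclass}, integrating $F'$ against $F(\pm1)=0$ yields \eqref{KEintegral}, and the converse uses that $\deg F\leq d_N+2$ guarantees positivity. No gaps beyond the normalization bookkeeping you already flag.
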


\begin{remark}
A similar statement (see Section 5.2 of \cite{BoTo13b}) provides the criterion for the existence of admissible K\"ahler Ricci solitons.
\end{remark}

\section{The Sasaki Join and Admissible Sasakian Structures}\label{sasadm}
Our method involves the Sasaki join of a regular $(2p+1)$-dimensional Sasakian manifold $M$ with the weighted Sasaki 3-sphere $S^3_\bfw$ where the weight vector $\bfw=(w_1,w_2)$ has relatively prime components $w_i\in \bbz^+$ that are ordered as $w_1\geq w_2$. We shall also assume that $M$ has a regular Sasaki metric with constant scalar curvature, and that $\bfl=(l_1,l_2)$ has components $l_i$ that are positive integers satisfying the {\it admissibility condition} 
\begin{equation}\label{adcond}
\gcd(l_2,l_1w_1w_2)=1). 
\end{equation}
The join is then constructed from the following  commutative diagram
\begin{equation}\label{s2comdia}
\begin{matrix}  M\times S^3_\bfw &&& \\
                          &\searrow\pi_L && \\
                          \decdnar{\pi_{2}} && M_{\bfl,\bfw} &\\
                          &\swarrow\pi_1 && \\
                         N\times\bbc\bbp^1[\bfw] &&& 
\end{matrix}
\end{equation}
where the $\pi_2$ is the product of the standard Sasakian projections $\pi_M:M\ra{1.6}N$ and $S^3_\bfw\ra{1.6} \bbc\bbp^1[\bfw]$. The circle projection $\pi_L$ is generated by the vector field 
\begin{equation}\label{Lvec}
L_{\bfl,\bfw}=\frac{1}{2l_1}\xi_1-\frac{1}{2l_2}\xi_\bfw,
\end{equation}
and its quotient manifold $M_{\bfl,\bfw}$, which is called the $(l_1,l_2)$-join of $M$ and $S^3_\bfw$ and denoted by $M_{\bfl,\bfw}=M\star_{l_1,l_2}
S^3_\bfw$, has a naturally induced quasi-regular Sasakian structure $\cals_{\bfl,\bfw}$ with contact 1-form $\eta_{l_1,l_2,\bfw}$. It is reducible in the sense that both the transverse metric $g^T$ and the contact bundle $\cald_{l_1,l_2,\bfw}=\ker\eta_{l_1,l_2,\bfw}$ split as direct sums. Since the CR-structure $(\cald_{l_1,l_2,\bfw},J)$ is the horizontal lift of  the complex structure on $N\times\bbc\bbp^1[\bfw]$, this splits as well. The choice of $\bfw$ determines the transverse complex structure $J$.

\subsection{The $\bfw$-Sasaki Cone}
Let $\gt_{1}, \gt_\bfw$ denote the Lie algebras of the maximal tori in the Sasakian automorphism group of the Sasakian structures on $M,S^3_\bfw$, respectively. Then the unreduced Sasaki cone of the join $M_{\bfl,\bfw}$ is defined by 
\begin{equation}\label{sascone}
\gt^+_{\bfl,\bfw}=\{R\in (\gt_{1}\oplus \gt_\bfw)/\{L_{\bfl,\bfw}\}~|~\eta_{\bfl,\bfw}(R)>0\}
\end{equation}
where $\{L_{\bfl,\bfw}\}$ is the Lie algebra generated by the vector field $L_{\bfl,\bfw}$. Choosing an $R\in \gt_{\bfl,\bfw}^+$ corresponds to choosing a different contact form, namely $\frac{\eta_{\bfl,\bfw}}{\eta_{\bfl,\bfw}(R)}$, in the underlying strictly pseudoconvex CR structure. In this case the functions $\eta_{\bfl,\bfw}(R)$ are spanned by the {\it Killing potentials}, that is, the components of the moment map associated to the Hamiltonian vector fields. Note that $\gt_{\bfl,\bfw}^+$ is a subcone of the infinite dimensional {\it Reeb cone} $\calr(\cald)$ consisting of the Reeb vector fields associated to any contact form within the contact structure \cite{Boy08}.

Actually, our method only makes use of the so-called $\bfw$-subcone whose representatives lie in the 2-dimensional Lie subalgebra $\gt_\bfw$. Such elements can be described as follows: let $\{H_1,H_2\}$ denote the standard basis for $\gt_\bfw$. Then the subcone $\gt^+_\bfw$, called the $\bfw$-Sasaki cone, is the set of elements of the form $v_1H_1+v_2H_2$ with $v_1,v_2>0$. We think of the Sasaki cone $\gt^+_{\bfl,\bfw}$ as the local moduli space of Sasakian structures associated to the CR structure $(\cald_{l_1,l_2,\bfw},J)$.
However, here we are only concerned with the $\bfw$-subcone.

\subsection{Deformations in the $\bfw$-Sasaki Cone}
Here we briefly describe how to obtain new Sasakian structures by deforming in  the Sasaki cone. We choose the Reeb vector field in $\gt^+_\bfw$ defined by $\xi_\bfv=v_1H_1+v_2H_2$ where $v_1,v_2\in\bbz^+$ are relatively prime.  We refer to \cite{BoTo13,BoTo13b,BoTo14a} for details. When we deform to an arbitrary quasi-regular Reeb field in the $\bfw$-Sasaki cone, we obtain instead of the product $T^2$ action of diagram \eqref{s2comdia} the $T^2$ action given by
\begin{equation}\label{twistT2}
(x,u;z_1,z_2)\mapsto (x,e^{il_2\theta}u;e^{i(v_1\phi-l_1w_1\theta)}z_1,e^{i(v_2\phi-l_1w_2\theta)}z_2),
\end{equation}
where $(x,u)$ denote bundle coordinates on $M$ and $(z_1,z_2)\in\bbc^2$ satisfy $|z_1|^2+|z_2|^2=1$. The quotient of $M\times S^3$ by this $T^2$ action is a complex K\"ahler orbifold $B_{l_1,l_2,\bfv,\bfw}$ which can be identified with the ruled orbifold $(S_n,\grD)$ described in Section \ref{adsec}, where $\grD$ is the branch divisor of Equation \eqref{branchdiv2}. Here the ramification indices satisfy $m_i=v_i\frac{l_2}{s}=v_im$ and  $s=\gcd(|w_1v_2-w_2v_1|,l_2)$ and $n=l_1\bigl(\frac{w_1v_2-w_2v_1}{s}\bigr)$. The fiber of the orbifold $(S_n,\grD)$ is the orbifold $\bbc\bbp[v_1,v_2]/\bbz_m$. We can divide this $T^2$ action into two $S^1$ actions, namely, the first $S^1$ action is that generated by the vector field \eqref{Lvec}, and the second that generated by the Reeb vector field $\xi_\bfv\in\gt^+_\bfw$ with $v_1,v_2$ relatively prime positive integers. This gives rise to the commutative diagram
\begin{equation}\label{comdia1}
\begin{matrix}  M\times S^{3}_\bfw &&& \\
                          &\searrow && \\
                          \decdnar{\pi_B} && M_{\bfl,\bfw} &\\
                          &\swarrow{\pi_\bfv} && \\
                          B_{l_1,l_2,\bfv,\bfw} &&& 
\end{matrix}
\end{equation}
where $\pi_B$ and $\pi_\bfv$ are the obvious projections.

Each $\bfw$-Sasaki cone has a unique ray, called {\it almost regular}, obtained by setting $\bfv=(v_1,v_2)=(1,1)$. In this case the fiber of the orbibundle is $\bbc\bbp^1/\bbz_m$, and if $s=l_2$ the Reeb vector field is regular in which case there is no branch divisor and $(S_n,\emptyset)$ has a trivial orbifold structure. Note that in the latter case $l_2$ must divide $w_1-w_2$. Hence, for $l_2=1$ every $\bfw$-Sasaki cone has a unique regular ray of Reeb vector fields.

The induced K\"ahler class on $B_{l_1,l_2,\bfv,\bfw}= (S_n,\grD)$ can now be determined to be a multiple of the admissible class $\Omega_r$ from \eqref{admKahclass}, where
$r=\frac{w_1v_2-w_2v_1}{w_1v_2+w_2v_1}$ \cite{BoTo14a}. Therefore the constructions of admissible K\"ahler metrics produces quasi-regular Sasaki metrics in the $\bfw$-cone of the joins $M_{\bfl,\bfw}$. Moreover these constructions can be extended to the irregular case as well. We refer to the paper \cite{BoTo14a} for the details.

\section{The Topology of the Joins}
Here we discuss some general results concerning the topology of the joins $M_{\bfl,\bfw}$. Of course, as we shall see we can say much more in certain specific cases. First we easily see from the long exact homotopy sequence of the $S^1$-bundle $\pi_L$ of Diagram \eqref{s2comdia} that 
\begin{lemma}\label{gentop}
Let $M_{\bfl,\bfw}$ be as described above. Then
\begin{enumerate}
\item The natural map $\pi_1(M)\ra{1.8} \pi_1(M_{\bfl,\bfw})$ is surjective. In particular, if $M$ is simply connected so is $M_{\bfl,\bfw}$.
\item If $M$ is simply connected, then $\pi_2(M_{\bfl,\bfw})\approx \pi_2(M)\oplus \bbz$.
\item For $i\geq 3$, we have $\pi_i(M_{\bfl,\bfw})\approx \pi_i(M)\oplus \pi_i(S^3)$.
\end{enumerate}
\end{lemma}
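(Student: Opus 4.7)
The plan is to apply the long exact homotopy sequence of the principal circle bundle
\[
S^1 \hookrightarrow M\times S^3_\bfw \xrightarrow{\pi_L} M_{\bfl,\bfw}
\]
from Diagram \eqref{s2comdia}, using the facts that $S^3_\bfw$ is homeomorphic to $S^3$, that $\pi_i(S^1)$ vanishes except in degree $1$, and that the K\"unneth-type splitting $\pi_i(M\times S^3)\cong \pi_i(M)\oplus \pi_i(S^3)$ holds for all $i\geq 1$.

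For (1), I would look at the tail
\[
\pi_1(S^1)\longrightarrow \pi_1(M\times S^3_\bfw)\longrightarrow \pi_1(M_{\bfl,\bfw})\longrightarrow \pi_0(S^1)=0.
\]
Since $S^3$ is simply connected, the middle term equals $\pi_1(M)$, and exactness at the right forces surjectivity of the induced map $\pi_1(M)\to\pi_1(M_{\bfl,\bfw})$. The simply connected conclusion is then immediate. For (3), I would use that for $i\geq 3$ both $\pi_i(S^1)$ and $\pi_{i-1}(S^1)$ vanish, so the long exact sequence collapses to an isomorphism $\pi_i(M_{\bfl,\bfw})\cong \pi_i(M\times S^3)\cong \pi_i(M)\oplus \pi_i(S^3)$.

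For (2), assuming $\pi_1(M)=0$, I would examine the segment
\[
0=\pi_2(S^1)\longrightarrow \pi_2(M\times S^3_\bfw)\longrightarrow \pi_2(M_{\bfl,\bfw})\longrightarrow \pi_1(S^1)\longrightarrow \pi_1(M\times S^3_\bfw)=0,
\]
where the rightmost group vanishes because both factors are simply connected. This yields a short exact sequence
\[
0\longrightarrow \pi_2(M)\longrightarrow \pi_2(M_{\bfl,\bfw})\longrightarrow \bbz\longrightarrow 0,
\]
using $\pi_2(S^3)=0$. Because $\bbz$ is free abelian, the sequence splits, giving the claimed isomorphism $\pi_2(M_{\bfl,\bfw})\cong \pi_2(M)\oplus\bbz$.

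There is no real obstacle here; every step is a mechanical consequence of the long exact sequence once one knows the homotopy groups of $S^1$ and uses that $S^3_\bfw$ is diffeomorphic to $S^3$ as a smooth manifold (the weight only affects the Sasakian structure, not the underlying topology). The only minor point worth mentioning is ensuring that the connecting map $\pi_2(M_{\bfl,\bfw})\to \pi_1(S^1)=\bbz$ is actually surjective, which follows from exactness together with $\pi_1(M\times S^3_\bfw)=0$ in the simply connected case.
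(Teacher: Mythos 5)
Your proposal is correct and is exactly the argument the paper intends: the paper itself derives the lemma from the long exact homotopy sequence of the $S^1$-bundle $\pi_L$ of Diagram \eqref{s2comdia}, using that $S^3_\bfw$ is $S^3$ as a smooth manifold and that the sequence degenerates because $\pi_i(S^1)=0$ for $i\neq 1$. The splitting of the resulting extension by $\bbz$ in part (2) and the surjectivity in part (1) are handled just as you describe.
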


\subsection{The Contact Structure}
The first Chern class of the complex vector bundle $\cald_{l_1,l_2,\bfw}$ is an important contact invariant. For our joins we have
\begin{equation}\label{c1cald}
c_1(\cald_{l_1,l_2,\bfw})=\pi_M^*c_1(N)-l_1|\bfw|\grg,
\end{equation}
where $\grg$ is a generator in $H^2(M_{\bfl,\bfw},\bbz)$.
We are particularly interested in a special case. Let $\gro_N$ denote the K\"ahler form on $N$. We say that the class $[\gro_N]$ is {\it quasi-monotone} if  $c_1(N)=\cali_N[\gro_N]$ for some integer $\cali_N$. Here $\cali_N$ is the {\it Fano index} when $\cali_N$ is positive (the monotone case) and the {\it canonical index} when it is negative. We also allow the case $\cali_N=0$. So when $[\gro_N]$ is quasi-monotone we have
\begin{equation}\label{c1cald2}
c_1(\cald_{l_1,l_2,\bfw})=(l_2\cali_N -l_1|\bfw|)\grg.  
\end{equation}

Of course, from this one can extract a topological invariant, namely, the second Stiefel-Whitney class $w_2(M_{\bfl,\bfw})$ which is the mod 2 reduction of $c_1(\cald_{l_1,l_2,\bfw})$. In the quasi-monotone case we have 
\begin{equation}\label{w2}
w_2(M_{\bfl,\bfw})\equiv (l_2\cali_N-l_1|\bfw|) \mod 2.
\end{equation}

\subsection{Computing the Cohomology Ring}
More importantly there is a method, used in \cite{WaZi90,BG00a} (see also Section 7.6.2 of \cite{BG05}) for computing the cohomology ring of $M_{\bfl,\bfw}$. Since there are orbifolds involved it is convenient to work with related fibrations involving classifying spaces instead of those of Diagram (\ref{s2comdia}). We thus have the commutative diagram of fibrations
\begin{equation}\label{orbifibrationexactseq}
\begin{matrix}M\times S^3_\bfw &\ra{2.6} &M_{\bfl,\bfw}&\ra{2.6}
&\mathsf{B}S^1 \\
\decdnar{=}&&\decdnar{}&&\decdnar{\psi}\\
M\times S^3_\bfw&\ra{2.6} & N\times\mathsf{B}\bbc\bbp^1[\bfw]&\ra{2.6}
&\mathsf{B}S^1\times \mathsf{B}S^1\, 
\end{matrix} \qquad \qquad
\end{equation}
where $\mathsf{B}G$ is the classifying space of a group $G$ or Haefliger's classifying space \cite{Hae84} of an orbifold if $G$ is an orbifold. Note that the lower fibration is a product of fibrations. Then using 
\begin{lemma}\label{cporbcoh}
For $w_1$ and $w_2$ relatively prime positive integers we have
$$H^r_{orb}(\bbc\bbp^1[\bfw],\bbz)=H^r( \mathsf{B}\bbc\bbp^1[\bfw],\bbz)= \begin{cases}
                    \bbz &\text{for $r=0,2$,}\\                  
                    \bbz_{w_1w_2} &\text{for $r>2$ even,}\\
                     0 &\text{for $r$ odd.}
                     \end{cases}$$           
\end{lemma}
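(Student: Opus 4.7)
The plan is to realize $\mathsf{B}\bbc\bbp^1[\bfw]$ as a Borel construction, and then run the Gysin (equivalently Leray--Serre) sequence.

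First, identify the classifying space. Since $\bbc\bbp^1[\bfw]$ is the quotient orbifold $[\bbc^2\setminus\{0\}/\bbc^*]$ with $\bbc^*$ acting by $t\cdot(z_1,z_2)=(t^{w_1}z_1,t^{w_2}z_2)$, Haefliger's classifying space is homotopy equivalent to the homotopy quotient
\[
\mathsf{B}\bbc\bbp^1[\bfw]\;\simeq\;(\bbc^2\setminus\{0\})\times_{S^1}ES^1\;\simeq\;S^3_{\bfw}\times_{S^1}ES^1,
\]
which comes equipped with a fibration $S^3\to \mathsf{B}\bbc\bbp^1[\bfw]\to BS^1$. Recognize this fibration as the unit sphere bundle of the rank-two Hermitian vector bundle $L^{\otimes w_1}\oplus L^{\otimes w_2}$ over $BS^1=\bbc\bbp^\infty$, where $L$ is the universal line bundle. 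Writing $x\in H^2(BS^1,\bbz)$ for the generator with $c_1(L)=x$, the Euler class of this rank-four real bundle is
\[
e \;=\; c_2(L^{\otimes w_1}\oplus L^{\otimes w_2}) \;=\; c_1(L^{\otimes w_1})\,c_1(L^{\otimes w_2}) \;=\; w_1w_2\,x^2 .
\]

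Next, feed this into the Gysin sequence
\[
\cdots\to H^{k-4}(BS^1)\xrightarrow{\cup e}H^k(BS^1)\to H^k(\mathsf{B}\bbc\bbp^1[\bfw])\to H^{k-3}(BS^1)\to\cdots.
\]
Because $H^*(BS^1,\bbz)=\bbz[x]$ is concentrated in even degrees, multiplication by $e=w_1w_2 x^2$ is injective, so the connecting map $H^k(\mathsf{B}\bbc\bbp^1[\bfw])\to H^{k-3}(BS^1)$ is zero for all $k$. This immediately gives $H^\text{odd}(\mathsf{B}\bbc\bbp^1[\bfw])=0$ (since the preceding term then sits between two zeros), and for even $k=2m$ it reduces the sequence to
\[
0\to H^{2m-4}(BS^1)\xrightarrow{\cdot\, w_1w_2 x^2} H^{2m}(BS^1)\to H^{2m}(\mathsf{B}\bbc\bbp^1[\bfw])\to 0.
\]
Reading off the cokernels: for $m=0,1$ the group $H^{2m-4}(BS^1)$ vanishes and one gets $H^{2m}=\bbz$; for $m\ge 2$ both groups are $\bbz$ and one gets $H^{2m}=\bbz/w_1w_2\bbz$. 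This is exactly the statement of the lemma.

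The only place where any real care is required is the identification of the Euler class $e=w_1w_2\,x^2$; the rest is routine. Note that coprimality of $w_1,w_2$ plays no role in the integer $w_1w_2$ appearing in the torsion, but it is (implicitly) what guarantees that the complex orbifold structure on $\bbc\bbp^1[\bfw]$ is the standard weighted one with isotropy only at the two fixed points, so that the Borel construction above is the correct model of $\mathsf{B}\bbc\bbp^1[\bfw]$.
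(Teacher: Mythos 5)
Your argument is correct and is essentially the intended one: the paper states the lemma without proof, relying on the standard identification $\mathsf{B}\bbc\bbp^1[\bfw]\simeq S^3_{\bfw}\times_{S^1}ES^1$ and the fibration $S^3\to\mathsf{B}\bbc\bbp^1[\bfw]\to \mathsf{B}S^1$ appearing in Diagram \eqref{orbifibrationexactseq}, where the only nontrivial differential of the Leray--Serre spectral sequence is cup product with the Euler class $w_1w_2x^2$ --- which is precisely your Gysin sequence. The key step, computing $e=c_2(L^{\otimes w_1}\oplus L^{\otimes w_2})=w_1w_2x^2$, is done correctly, and the cokernel computation then yields exactly the stated groups.
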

\noindent together with Diagram \eqref{orbifibrationexactseq} and standard arguments we obtain
\begin{algorithm}
Given the differentials in the spectral sequence of the fibration 
$$M\ra{1.5}N\ra{1.5}\mathsf{B}S^1,$$ 
one can use the commutative diagram (\ref{orbifibrationexactseq}) to compute the cohomology ring of the join manifold $M_{\bfl,\bfw}$.
\end{algorithm}

A case of particular interest occurs by taking $M=S^{2p+1}$ with $p>1$ \cite{BoTo14b}. The case $p=1$ behaves topologically different and is treated in Section \ref{secg0}.

\begin{theorem}\label{findiff}
In each odd dimension $2p+3>5$ there exist countably infinite simply connected toric contact manifolds $M_{\bfl,\bfw} =S^{2p+1}\star_{l_1,l_2}S^3_\bfw$ of Reeb type depending on $4$ positive integers $l_1,l_2,w_1,w_2$ satisfying $\gcd(l_2,l_1w_i)=\gcd(w_1,w_2)=1$, and with integral cohomology ring
$$H^*(M_{\bfl,\bfw},\bbz)\approx\bbz[x,y]/(w_1w_2l_1^2x^2,x^{p+1},x^2y,y^2)$$
where $x,y$ are classes of degree $2$ and $2p+1$, respectively. Furthermore, with $l_1,w_1,w_2$ fixed there are a finite number of diffeomorphism types with the given cohomology ring. Hence, in each such dimension there exist simply connected smooth manifolds with countably infinite toric contact structures of Reeb type that are inequivalent as contact structures.
\end{theorem}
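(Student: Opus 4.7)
The plan is to apply the join construction of Section~\ref{sasadm} with $M = S^{2p+1}$ carrying its standard round regular Sasakian structure (so $N=\bbc\bbp^p$) and the weighted Sasaki sphere $S^3_\bfw$, then verify each assertion of the theorem in turn. The admissibility hypothesis $\gcd(l_2, l_1 w_1 w_2) = \gcd(w_1,w_2) = 1$ guarantees that the quotient of $M \times S^3_\bfw$ by the $S^1$-action generated by $L_{\bfl,\bfw}$ is a smooth manifold. Simple connectivity of $M_{\bfl,\bfw}$ follows from Lemma~\ref{gentop}(1) since $S^{2p+1}$ is simply connected for $p\geq 1$. The standard $T^{p+1}$ on $S^{2p+1}$ together with the $T^2$ on $S^3_\bfw$ descends to an effective $T^{p+2}$-action on $M_{\bfl,\bfw}$; since $2(p+2) = \dim M_{\bfl,\bfw} + 1$, the contact structure is toric, and $\xi_{\bfl,\bfw}$ lying in this torus makes it of Reeb type.

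The cohomology ring is obtained by running the Algorithm above on diagram~(\ref{orbifibrationexactseq}). The lower fibration factors as a product, so its Serre spectral sequence is the tensor product of two known ones: for $S^{2p+1} \to \bbc\bbp^p \to \mathsf{B}S^1$ the transgression sends the fundamental class of $S^{2p+1}$ to the Euler class generator with relation $h^{p+1} = 0$, and for $S^3_\bfw \to \mathsf{B}\bbc\bbp^1[\bfw] \to \mathsf{B}S^1$ the base cohomology is given by Lemma~\ref{cporbcoh}, yielding a degree-$2$ generator $t$ satisfying $w_1 w_2 t^2 = 0$. The map $\psi$, induced by the inclusion of the circle generated by $L_{\bfl,\bfw} = \frac{1}{2l_1}\xi_1 - \frac{1}{2l_2}\xi_\bfw$ into $T^2$, pulls the two base generators back to integer multiples of $x \in H^2(\mathsf{B}S^1)$, with the second-factor weight equal to $l_1$ (dual to the weight $1/(2l_1)$ in $L_{\bfl,\bfw}$). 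Transferring the relations via $\psi^*$ yields $x^{p+1} = 0$ and $w_1 w_2 l_1^2 x^2 = 0$. The degree-$(2p+1)$ generator $y$ transgresses from the $S^{2p+1}$ factor; $y^2 = 0$ is forced by degree parity, and $x^2 y = 0$ follows because $x^2$ is torsion while $y$ is free of infinite order.

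For the finiteness of diffeomorphism types with $l_1,w_1,w_2$ fixed, observe that the cohomology ring above is independent of $l_2$, and $w_2(M_{\bfl,\bfw})$ from~(\ref{w2}) takes at most two values. The Pontryagin classes can be computed from the splitting $T(M \times S^3_\bfw) \cong \pi_L^* TM_{\bfl,\bfw} \oplus \bbr L_{\bfl,\bfw}$ and are integer cohomology classes bounded by the ring, hence take only finitely many values. Invoking the surgery-theoretic classification of simply connected smooth manifolds (in the spirit of Wall), the diffeomorphism type is determined up to finite ambiguity by the cohomology ring together with these characteristic classes, yielding finitely many diffeomorphism types. On the other hand, the contact invariant $c_1(\cald_{l_1,l_2,\bfw}) = (l_2(p+1) - l_1|\bfw|)\grg$ takes infinitely many distinct values as $l_2$ ranges over integers coprime to $l_1 w_1 w_2$; by pigeonhole, some smooth $M_{\bfl,\bfw}$ supports infinitely many contact-inequivalent Reeb-type structures.

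The main obstacle is the cohomology computation in step two: tracking the spectral-sequence differentials and the action of $\psi^*$ precisely enough to produce the relation $w_1 w_2 l_1^2 x^2 = 0$ with exactly the coefficient $l_1^2$, rather than some larger or smaller multiple determined by the normalizations of $\xi_1$ and $\xi_\bfw$. The finiteness step is essentially black-boxed to surgery theory, and the remaining assertions follow directly from the join construction together with Lemma~\ref{gentop}.
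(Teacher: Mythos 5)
Your outline follows exactly the route the paper takes (Lemma \ref{gentop} for simple connectivity, the torus count for toricity, Lemma \ref{cporbcoh} and the Algorithm applied to Diagram \eqref{orbifibrationexactseq} for the ring, finiteness of diffeomorphism types plus the invariant $c_1(\cald_{l_1,l_2,\bfw})$ and pigeonhole for the final claim), and it is correct in outline. Two steps are asserted in a form that is not literally true and need repair. First, $\psi^*$ sends the two generators of $H^2(\mathsf{B}S^1\times\mathsf{B}S^1)$ to $l_2x$ and $\pm l_1x$, so naturality of the transgressions gives the relations $l_2^{p+1}x^{p+1}=0$ (from the $S^{2p+1}$ factor) and $w_1w_2l_1^2x^2=0$ (from the $S^3_\bfw$ factor); the relation $x^{p+1}=0$ is \emph{not} obtained directly by ``transferring'' $h^{p+1}=0$, but by combining the two relations via $x^{p+1}=x^{p-1}\cdot x^2$ and $\gcd(l_2^{p+1},w_1w_2l_1^2)=1$ --- this is precisely where the admissibility condition $\gcd(l_2,l_1w_1w_2)=1$ enters, and it is the point you yourself flag as the main obstacle. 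Second, ``the diffeomorphism type is determined up to finite ambiguity by the cohomology ring together with the characteristic classes'' is not a theorem of surgery theory in that generality; what is actually invoked is the classification-up-to-finite-ambiguity result of Sullivan/Kreck--Triantafillou, which requires in addition that the rational homotopy type be fixed. Here one observes that all $M_{\bfl,\bfw}$ with $l_1,\bfw$ fixed have the rational cohomology of $S^2\times S^{2p+1}$ (since $x^2$ is torsion), that this pins down the rational homotopy type, and that the Pontryagin classes lie in the finite groups $H^{4k}$ and hence take finitely many values. With those two repairs your argument coincides with the paper's.
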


\section{The Main Results}
In this section we collect the main general results that come from the construction of Section \ref{sasadm}. More can be said in the special case of dimension 5 and we will present our results on $S^3$-bundles over Riemann surfaces in Section \ref{sasoverRiem}. Our general results which appear in \cite{BoTo13b,BoTo14a,BoTo14b} prove the existence of extremal and constant scalar curvature Sasaki metrics by applying the admissible construction of Section \ref{admissec} to the join construction of Section \ref{sasadm}. 

\begin{remark}
Whenever, in the following, we state that a Sasaki structure is Einstein, is a Ricci soliton, is extremal, or has constant scalar curvature, we mean that there is a Sasaki structure in the same isotopy class with such property.
\end{remark}

\subsection{Constant Scalar Curvature Sasaki Metrics}

Our main results on constant scalar curvature were given in \cite{BoTo14a}:

\begin{theorem}\label{admjoincsc}
Let $M_{\bfl,\bfw}$ be the $S^3_\bfw$-join with a regular Sasaki manifold $M$ which is an $S^1$-bundle over a compact K\"ahler manifold $N$ with constant scalar curvature. Then for each vector $\bfw=(w_1,w_2)\in \bbz^+\times\bbz^+$ with relatively prime components satisfying $w_1>w_2$ there exists a Reeb vector field $\xi_\bfv$ in the 2-dimensional $\bfw$-Sasaki cone on $M_{\bfl,\bfw}$ such that the corresponding ray of Sasakian structures $\cals_a=(a^{-1}\xi_\bfv,a\eta_\bfv,\Phi,g_a)$ has constant scalar curvature. Suppose that in addition  the scalar curvature of N satisfies $s_N\geq 0$, then the $\bfw$-Sasaki cone is exhausted by extremal Sasaki metrics. Moreover, if the scalar curvature of N satisfies $s_N> 0$, then for sufficiently large $l_2$ there are at least three CSC rays in the $\bfw$-Sasaki cone of the join $M_{\bfl,\bfw}$.
\end{theorem}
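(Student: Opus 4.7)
The plan is to reduce the entire statement to the analysis of a single polynomial/rational equation in the parameter $r=\frac{w_1v_2-w_2v_1}{w_1v_2+w_2v_1}$ which parametrizes the rays of the $\bfw$-Sasaki cone, and to apply the admissible K\"ahler machinery of Section \ref{admissec} on the base orbifold $(S_n,\Delta)$ produced by diagram \eqref{comdia1}. Specifically, for a quasi-regular Reeb vector field $\xi_\bfv = v_1H_1+v_2H_2\in\gt^+_\bfw$, the quotient $B_{l_1,l_2,\bfv,\bfw}$ is the log pair $(S_n,\Delta)$ with $n=l_1(w_1v_2-w_2v_1)/s$, ramification indices $m_i=v_il_2/s$, and $s=\gcd(|w_1v_2-w_2v_1|,l_2)$; the induced transverse K\"ahler class is the admissible class $\Omega_{\mathbf r}$. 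Constant scalar curvature (respectively, extremality) of the Sasaki metric in this ray then becomes exactly the CSC (resp.\ extremality) criterion for an admissible K\"ahler metric in $\Omega_{\mathbf r}$ on $(S_n,\Delta)$, in particular the one-variable equation \eqref{candkeqn} with $c,k$ given by \eqref{whatcis}--\eqref{whatkis}.

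For the existence of at least one CSC ray I would proceed by continuity in $r\in(-1,1)$. After clearing the common denominators in \eqref{candkeqn}, one obtains a polynomial $P_{l_2}(r)$ whose zeros in $(-1,1)$ are in bijection with the CSC rays. Evaluating the signs of $P_{l_2}$ at the endpoints $r\to\pm 1$ (care is required because several terms in $c$ and $k$ blow up or vanish; cleanly rescaling so that $(1\pm r)$-factors cancel is the main bookkeeping step) gives opposite signs, so the intermediate value theorem produces at least one real root $r_*\in(-1,1)$. One then approximates $r_*$ by rationals of the form $(w_1v_2-w_2v_1)/(w_1v_2+w_2v_1)$ with $v_1,v_2$ coprime positive integers, invoking that the admissible ansatz \eqref{g} depends smoothly on the real parameters $\bfv$ and so extends continuously to irregular Reeb vector fields.

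For the extremal-exhaustion claim under $s_N\geq 0$: by Proposition \ref{extremalcond} together with the remark immediately following Proposition \ref{adKahprop}, whenever the base scalar curvature is non-negative the positivity of the extremal polynomial $F_{\mathbf r}$ on $(-1,1)$ is automatic. Consequently every $r\in(-1,1)$, equivalently every ray in $\gt^+_\bfw$, produces an admissible extremal K\"ahler metric on $(S_n,\Delta)$, which lifts through $\pi_\bfv$ to an extremal Sasaki metric on $M_{\bfl,\bfw}$ in the appropriate isotopy class.

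The hardest step is the three distinct CSC rays when $s_N>0$ and $l_2$ is large. The strategy is to view $P_{l_2}(r)=0$ as a perturbation problem in $1/l_2$: since $m_i=v_il_2/s$ enters only through $c,k$, the terms proportional to $m_1m_2\,s_{N_n}$ dominate as $l_2\to\infty$ (and the sign of $s_{N_n}$ is fixed and positive by hypothesis). Dividing out by the dominant power of $l_2$ yields a limit polynomial $P_\infty(r)$ depending only on $r$ and $d_N$ (with an overall factor of $s_{N_n}$ that can then be divided away). I would then show that $P_\infty$ has three simple zeros in $(-1,1)$ by an explicit analysis: using the oddness of the reduced equation in $r$, one zero lies at $r=0$, and the dominance of $\bigl((1+r)^{d_N+1}-(1-r)^{d_N+1}\bigr)^2$ near $r=\pm 1$ forces two further sign changes. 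The implicit function theorem applied at each simple zero of $P_\infty$ then lifts it to a zero of $P_{l_2}$ for all sufficiently large $l_2$. The main obstacle, which I expect to require the most delicate computation, is verifying that all three limit roots are simple; any tangential root would break the perturbation argument and would require a finer expansion in $1/l_2$ to decide whether the root persists and remains in $(-1,1)$.
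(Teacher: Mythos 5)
Your overall route is the same as the one the paper takes (the proof is in \cite{BoTo14a}, but Sections \ref{admissec} and \ref{sasadm} here lay out exactly this reduction): identify the transverse K\"ahler class of the ray $\xi_\bfv$ with the admissible class $\Omega_{\mathbf r}$ on $(S_n,\Delta)$, turn CSC into the one--variable equation \eqref{candkeqn}, get one root by the intermediate value theorem, get extremal exhaustion for $s_N\geq 0$ from the automatic positivity of the extremal polynomial, and get three roots for large $l_2$ from an asymptotic sign count. Two of your steps, however, would fail as written.

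First, the rational--approximation step for the CSC root does not do what you need. The root $r_*$ of the polynomial is generically irrational, and the quasi-regular rays with $r$ rational and close to $r_*$ are \emph{not} CSC; smooth dependence of the ansatz \eqref{g} on $\bfv$ cannot transfer the CSC property from non-CSC neighbors to the limit ray. The correct move --- and the one the paper makes --- is that equation \eqref{candkeqn} and the admissible transverse K\"ahler geometry are defined for all real $\bfv$ (equivalently all $r\in(-1,1)$), so an irrational root directly produces an irregular CSC Sasakian structure; no approximation is involved. Second, the structural claims about the limit polynomial $P_\infty$ in the three-root argument are not right. Taking $m_i=v_il_2/s\to\infty$ in \eqref{whatcis}--\eqref{whatkis}, the surviving terms are those proportional to $m_1m_2s_{N_n}$, and the resulting limit of \eqref{candkeqn} is neither odd nor even in $r$: the quantities $(1\pm r)^{d_N+1}$ enter through both squares and mixed products, and $s_{N_n}$ itself changes sign with $r$ (it is proportional to $s_N/n$ and $n$ has the sign of $r$), so $r=0$ is a singular point of the limit rather than a symmetric root. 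A short computation shows the limit function tends to $-\infty$ as $r\to 0^{\pm}$ after the appropriate normalization, so the third sign change must be extracted from the subleading (finite-$l_2$) behavior near $r=0$, not from a zero of $P_\infty$ there. The sign pattern therefore has to be located by direct evaluation of \eqref{candkeqn} at the endpoints and at an interior point for large but finite $l_2$, which is how the bound of Proposition \ref{multcsc} arises; the symmetry-plus-perturbation shortcut you propose does not produce the middle root.
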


Most often the CSC Sasaki metrics belong to irregular Sasakian structures. 

Generally, it seems difficult to say anything about the diffeomorphism types of $M_{\bfl,\bfw}$. However, when $M$ is an odd dimensional sphere we can say a bit more. Combining Theorems \ref{findiff} and \ref{admjoincsc} we obtain:

\begin{theorem}\label{findiff2}
The contact structures of Theorem \ref{findiff} admit a $p+2$ dimensional cone of Sasakian structures with a ray of constant scalar curvature Sasaki metrics and for $l_2$ large enough they admit at least $3$ such rays. Moreover, for $l_1,w_1,w_2$ fixed, there are a finite number of diffeomorphism types; hence, in each odd dimension $2p+3>5$ there are smooth manifolds with a countably infinite number of inequivalent contact structures each having at least 3 rays of CSC Sasaki metrics.
\end{theorem}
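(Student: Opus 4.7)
The plan is to combine Theorems~\ref{findiff} and \ref{admjoincsc} in the special case $M=S^{2p+1}$, viewed via the Hopf fibration as the regular Sasakian $S^1$-bundle over $N=\bbc\bbp^p$ with the Fubini--Study metric. First I would observe that $\bbc\bbp^p$ carries a CSC K\"ahler metric with $s_N>0$, so the full hypothesis of Theorem~\ref{admjoincsc} is satisfied. For every admissible pair $(\bfl,\bfw)$ this produces a ray of CSC Sasaki metrics in the $\bfw$-Sasaki cone of $M_{\bfl,\bfw}=S^{2p+1}\star_{l_1,l_2}S^3_\bfw$, and at least three such rays once $l_2$ is sufficiently large.

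Next I would pin down the dimension of the full reduced Sasaki cone $\gt^+_{\bfl,\bfw}$. The maximal torus of the Sasakian automorphism group of the round $S^{2p+1}$ is the standard $(p+1)$-torus in $U(p+1)$, so $\dim\gt_1=p+1$. Together with $\dim\gt_\bfw=2$ and with the one-dimensional subalgebra spanned by $L_{\bfl,\bfw}$ being divided out, the Sasaki cone has dimension $(p+1)+2-1=p+2$, matching the claim. The CSC rays obtained in the previous paragraph sit inside this cone via its $\bfw$-subcone.

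For the contact-structure count I would invoke Theorem~\ref{findiff} as a black box: it already records both that these joins give countably many contact-inequivalent toric contact structures of Reeb type on simply connected smooth manifolds, and that once $l_1,w_1,w_2$ are fixed only finitely many diffeomorphism types occur as $l_2$ varies over the admissibility set $\{l_2\in\bbz^+ : \gcd(l_2,l_1w_1w_2)=1\}$. Since this set has positive density in $\bbz^+$, its tail $\{l_2\ge L\}$ is still infinite for any $L$. A pigeonhole argument then forces some single smooth manifold in this tail to carry infinitely many pairwise contact-inequivalent structures, and taking $L$ large enough that Theorem~\ref{admjoincsc} delivers three CSC rays for every such $l_2$ yields the final conclusion.

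The main obstacle I anticipate is not in any individual step but in the bookkeeping: one must simultaneously ensure the admissibility condition, the largeness of $l_2$ needed for the three-ray statement, and the compatibility of contact invariants from Theorem~\ref{findiff} with diffeomorphism type. All three constraints are already encoded in the cited results, so the proof should reduce to assembling the ingredients of Theorems~\ref{findiff} and \ref{admjoincsc} rather than carrying out any further analytic or topological computation.
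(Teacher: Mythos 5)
Your proposal is correct and follows exactly the route the paper intends: the paper offers no separate argument beyond the sentence ``Combining Theorems~\ref{findiff} and \ref{admjoincsc} we obtain,'' and your assembly --- $S^{2p+1}$ as the regular $S^1$-bundle over $\bbc\bbp^p$ with $s_N>0$, the $(p+1)+2-1=p+2$ count for the Sasaki cone, and the pigeonhole over the infinite admissible tail of $l_2$ against finitely many diffeomorphism types --- is precisely the intended combination.
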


It is interesting to note the relation with the CR Yamabe problem \cite{JeLe87} which proves the existence of a `pseudohermitian' metric of constant Webster scalar curvature for any strictly pseudoconvex CR structure\footnote{a strictly pseudeconvex CR structure is a particular case of a contact structure.} $(\cald,J)$. When $(\cald,J)$ is the underlying CR structure of a Sasaki metric, the pseudohermitian metric is precisely the transverse K\"ahler metric $g^T$, and the Webster scalar curvature is precisely the scalar curvature $s^T$ of $g^T$. The CR Yamabe problem has an important invariant, the so-called {\it CR Yamabe invariant} $\grl(M)$. We refer to \cite{JeLe87} for its definition and properties. Suffice it here to say that if $M$ is not an odd dimensional sphere, $\grl(M)$ is attained by some contact form in the contact structure with constant Webster scalar curvature. Moreover, if $\grl(M)$ satisfies $\grl(M)\leq 0$, the constant scalar curvature solution is unique up to transverse homothety. So for Sasaki structures with $\grl(M)\leq 0$ the CSC ray is not only unique in the Sasaki cone, but in the entire infinite dimensional Reeb cone. However, generally the transverse homothety class of CSC metrics is not unique. Our results give examples of Sasakian structures that exhibit this lack of uniqueness for the CR Yamabe problem, and they occur in the two-dimensional subcone $\gt_\bfw^+$. When $\bfw\neq (1,1)$ there are an infinite number of CR structures with three CSC rays which are inequivalent under CR automorphisms.

\subsection{Sasaski-Einstein Metrics}
For Sasaki-Einstein manifolds we have \cite{BoTo13b}

\begin{theorem}\label{admjoinse}
Let $M_{\bfl,\bfw}$ be the $S^3_\bfw$-join with a regular Sasaki manifold $M$ which is an $S^1$-bundle over a compact positive K\"ahler-Einstein manifold $N$ with a primitive K\"ahler class $[\gro_N]\in H^2(N,\bbz)$. Assume that the relatively prime positive integers $(l_1,l_2)$ are the relative Fano indices given explicitly by 
$$l_{1}=\frac{\cali_N}{\gcd(w_1+w_2,\cali_N)},\qquad   l_2=\frac{w_1+w_2}{\gcd(w_1+w_2,\cali_N)},$$ where $\cali_N$ denotes the Fano index of $N$. Then for each vector $\bfw=(w_1,w_2)\in \bbz^+\times\bbz^+$ with relatively prime components satisfying $w_1>w_2$ there exists a Reeb vector field $\xi_\bfv$ in the 2-dimensional $\bfw$-Sasaki cone on $M_{\bfl,\bfw}$ such that the corresponding Sasakian structure $\cals=(\xi_\bfv,\eta_\bfv,\Phi,g)$ is Sasaki-Einstein. Furthermore, the Sasaki-Einstein metric is unique up to transverse holomorphic transformations. Additionally (up to isotopy) the Sasakian structure associated to every single ray, $\xi_\bfv$, in the $\bfw$-Sasaki cone is a Sasaki-Ricci soliton as well as extremal.
\end{theorem}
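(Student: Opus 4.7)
The plan is to exploit the correspondence set up in Section \ref{sasadm} between rays $\xi_\bfv=v_1H_1+v_2H_2$ in the $\bfw$-Sasaki cone and admissible K\"ahler structures on the log pair $(S_n,\grD)$, then feed the resulting data into the K\"ahler-Einstein criterion \eqref{fanoclass}-\eqref{KEintegral}. A transverse K\"ahler-Einstein structure on the join is equivalent to a K\"ahler-Einstein orbifold structure on the quotient $B_{l_1,l_2,\bfv,\bfw}$; after a transverse homothety to normalize the Einstein constant, this yields a Sasaki-Einstein metric, so existence reduces to solving the two admissible K\"ahler-Einstein conditions for some $\bfv\in\gt^+_\bfw$.

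Substituting the explicit relations $m_i=v_i l_2/s$, $n=l_1(w_1v_2-w_2v_1)/s$, and $r=(w_1v_2-w_2v_1)/(w_1v_2+w_2v_1)$ into \eqref{fanoclass}, together with $s_{N_n}=\cali_N/n$, collapses that condition to the arithmetic identity $l_2\cali_N=l_1(w_1+w_2)$. The stated choice $l_1=\cali_N/\gcd(w_1+w_2,\cali_N)$, $l_2=(w_1+w_2)/\gcd(w_1+w_2,\cali_N)$ makes this identity automatic, so \eqref{fanoclass} holds for \emph{every} Reeb vector field in the $\bfw$-Sasaki cone. Consequently existence reduces to finding $\bfv$ at which \eqref{KEintegral} vanishes; the same substitution turns that equation into
\begin{equation*}
\int_{-1}^{1}\bigl(v_1(1-\gz)-v_2(1+\gz)\bigr)(1+r\gz)^{d_N}\,d\gz=0,
\end{equation*}
a single real equation in the ratio $v_1/v_2\in(0,\infty)$, or equivalently in $r\in(-1,1)$.

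To produce a root I would study the sign of this integral as $r$ varies, noting that $v_1/v_2=w_1(1-r)/(w_2(1+r))$ is a decreasing bijection onto $(0,\infty)$. At $r=0$ (where $\bfv\propto\bfw$) the integral equals $2(w_1-w_2)>0$, while as $r\to 1^-$ (so $v_1\to 0$) the integrand is dominated by $-v_2(1+\gz)^{d_N+1}$ and the integral becomes negative; the intermediate value theorem then supplies some $r_\ast\in(0,1)$, possibly irrational, at which it vanishes, giving the desired Reeb vector $\bfv_\ast$. Uniqueness of $r_\ast$ within the $\bfw$-cone follows from a direct monotonicity computation on the integral, while uniqueness of the transverse K\"ahler-Einstein metric within the resulting admissible class is the orbifold version of the Bando-Mabuchi theorem applied to the Fano orbifold $B_{l_1,l_2,\bfv_\ast,\bfw}$ and then lifted to the Sasakian setting; the nontrivial point here is that one must work in the orbifold category and carefully control the transverse holomorphic automorphism group.

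For the Sasaki-Ricci soliton and Calabi extremality assertions, the strategy is to replace the K\"ahler-Einstein criterion with the admissible K\"ahler-Ricci soliton criterion of Section 5.2 of \cite{BoTo13b} alluded to in the Remark above. That criterion produces a K\"ahler-Ricci soliton on $(S_n,\grD)$ for \emph{every} admissible class arising from the $\bfw$-cone, and lifting gives a Sasaki-Ricci soliton on $M_{\bfl,\bfw}$ along every such ray; extremality is automatic in the admissible framework because the momentum coordinate $\gz$ is a Killing potential, so $J\,\mathrm{grad}\,s$ is transversely holomorphic for the resulting metric. The main obstacle is the sign and monotonicity analysis used to produce and isolate $r_\ast$ above, together with the orbifold Bando-Mabuchi step, both of which require care when $d_N$ is large.
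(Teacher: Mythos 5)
Your overall strategy is exactly the one the paper (via \cite{BoTo13b}) uses: pass to the quotient $(S_n,\grD)$ for a quasi-regular $\xi_\bfv$, observe that with the relative Fano indices the condition \eqref{fanoclass} collapses to the identity $l_2\cali_N=l_1(w_1+w_2)$ and hence holds throughout the $\bfw$-cone, and then solve the single remaining equation \eqref{KEintegral}, rewritten as $\int_{-1}^{1}\bigl(v_1(1-\gz)-v_2(1+\gz)\bigr)(1+r\gz)^{d_N}\,d\gz=0$, by a sign change in $r$. Your endpoint evaluations ($2(w_1-w_2)>0$ at $r=0$ and a negative limit as $r\to 1^-$) are correct, so the existence part is sound, modulo the extension to the irregular case when $r_\ast$ is irrational, which the paper's transversal version of the admissible construction supplies.

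Two of your subsidiary steps are shaky. First, extremality along every ray is not ``automatic because $\gz$ is a Killing potential'': for an admissible metric one has $J\,\mathrm{grad}\,s=s'(\gz)K$ with $K$ the generator of the circle action, and this is holomorphic only when $s$ is an affine function of $\gz$, i.e.\ precisely when the extremal ODE \eqref{extremalode} holds; one must then still check that the extremal polynomial $F_{\mathbf r}$ is positive on $(-1,1)$, which here follows from $s_{N_n}>0$ and is the content of Theorem \ref{extthm}. Second, your uniqueness argument applies an orbifold Bando--Mabuchi theorem to the quotient $B_{l_1,l_2,\bfv_\ast,\bfw}$, but when $r_\ast$ is irrational there is no quotient orbifold to apply it to; the paper instead quotes the transverse uniqueness theorem of Nitta--Sekiya together with the Martelli--Sparks--Yau volume-minimization result to single out the ray in the Sasaki cone. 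Your proposed ``direct monotonicity computation'' for uniqueness of $r_\ast$ is asserted rather than carried out, and after eliminating $v_1,v_2$ via $v_1/v_2=w_1(1-r)/(w_2(1+r))$ the monotonicity of the integral in $r$ is not obvious for large $d_N$, so as written this step is a genuine gap.
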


These Sasaki-Einstein metrics were obtained earlier by physicists in \cite{GMSW04a,GMSW04b} by a different method, and they are most often irregular. A special case of particular interest are the $Y^{p,q}$ structures on $S^2\times S^3$ which are discussed further in Example \ref{Ypqex} below. The uniqueness statement in the theorem follows from \cite{NiSe12} which proves transverse uniqueness, up to transverse holomorphic transformations, then using \cite{MaSpYau06} which proves uniqueness in the Sasaki cone.

\subsection{Extremal Sasaki Metrics}
As far as general extremal Sasaki metrics are concerned we have 
\begin{theorem}\label{extthm}
Let $M_{\bfl,\bfw}$ be the $S^3_\bfw$-join with a regular Sasaki manifold $M$ which is an $S^1$-bundle over a compact K\"ahler manifold $N$ with constant scalar curvature $s_N\geq 0$. Then the $\bfw$-Sasaki cone is exhausted by extremal Sasaki metrics.
\end{theorem}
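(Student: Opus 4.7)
The plan is to reduce the existence of an extremal Sasaki metric for each Reeb vector field in the $\bfw$-Sasaki cone to the existence of an admissible extremal K\"ahler metric on the appropriate ruled orbifold $(S_n,\grD)$ described in Section \ref{sasadm}. First I would handle the quasi-regular rays, which correspond to pairs $\bfv=(v_1,v_2)\in\bbz^+\times\bbz^+$ with $\gcd(v_1,v_2)=1$. For each such $\bfv$, the commutative diagram \eqref{comdia1} identifies the quotient $B_{l_1,l_2,\bfv,\bfw}$ with an admissible log pair $(S_n,\grD)$ with explicit data $n=l_1(w_1v_2-w_2v_1)/s$, ramification indices $m_i=v_i l_2/s$, and with induced transverse K\"ahler class a positive multiple of $\Omega_{\mathbf r}$ for $r=(w_1v_2-w_2v_1)/(w_1v_2+w_2v_1)$. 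Thus an admissible extremal K\"ahler metric in $\Omega_{\mathbf r}$ pulls back via $\pi_\bfv$ to a transverse K\"ahler metric on $M_{\bfl,\bfw}$ with Reeb vector field $\xi_\bfv$; by the equivalence of extremality noted after \eqref{sasfun}, the resulting Sasaki metric is itself extremal in the same isotopy class.

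Next I would invoke Proposition \ref{extremalcond} to construct the extremal polynomial $F_{\mathbf r}(\gz)$ as the unique solution of the ODE \eqref{extremalode} with the endpoint conditions of Proposition \ref{adKahprop}. The degree of $F_{\mathbf r}$ is at most $d_N+3$, and an admissible extremal K\"ahler metric exists in $\Omega_{\mathbf r}$ precisely when $F_{\mathbf r}>0$ on $(-1,1)$. Here the hypothesis $s_N\geq 0$ enters decisively: since $(N_n,\gro_{N_n})$ is just a constant rescaling of $(N,\gro_N)$, we have $s_{N_n}\geq 0$ in the sign convention of Section \ref{admissec}, and as already observed after Proposition \ref{adKahprop}, this forces the positivity of $\Theta$ automatically from the structure of the ODE \eqref{extremalode}. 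In particular, the quasi-regular extremal metric in $\Omega_{\mathbf r}$ exists for \emph{every} admissible choice of $\bfv$, not only for the distinguished CSC ray produced in Theorem \ref{admjoincsc}.

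Finally, to cover the irregular rays of $\gt^+_\bfw$, I would combine density of quasi-regular rays in the two-dimensional cone with continuity of the ODE solution and its positivity in the parameter $r$: the admissible construction depends smoothly on $\bfv=(v_1,v_2)\in\bbr^+\times\bbr^+$ after passing to transverse local K\"ahler quotients, and Proposition \ref{extremalcond} extends verbatim to the irregular setting as emphasized in \cite{BoTo14a}. The principal obstacle of the proof is precisely the positivity assertion for $F_{\mathbf r}$; the $s_N\geq 0$ hypothesis is what makes this essentially automatic, but one must be careful with the sign of $n$ (which can be negative when $w_2v_1>w_1v_2$) and the compatible choice of sign for the K\"ahler structure $(\pm g_{N_n},\pm\gro_{N_n})$, so that $s_{N_n}$ genuinely enters \eqref{extremalode} with a non-negative coefficient. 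This sign bookkeeping, rather than any deep analytic difficulty, is the main point requiring care.
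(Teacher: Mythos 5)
Your proposal is correct and follows essentially the same route the paper intends: the survey itself defers the proof to \cite{BoTo14a}, but the argument it assembles is exactly yours --- identify the quotient of a quasi-regular $\xi_\bfv$ with the admissible log pair $(S_n,\grD)$ carrying a multiple of $\Omega_{\mathbf r}$, solve the extremal ODE \eqref{extremalode}, use the remark after Proposition \ref{extremalcond} that $s_N\geq 0$ (so that $s_{N_n}r\geq 0$ in either sign convention) forces positivity of the extremal polynomial, and then note that the admissible ansatz makes sense for arbitrary real $r$ so the irregular rays are covered directly. Your attention to the sign of $n$ and of $s_{N_n}$ is exactly the bookkeeping the construction requires; the only phrasing I would tighten is that the irregular case is handled by running the construction verbatim for irrational $\bfv$ rather than by a density-and-limit argument, as you yourself note in your last paragraph.
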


In particular, if the K\"ahler manifold $N$ has no Hamiltonian vector fields, then the $\bfw$-Sasaki cone is the entire Sasaki cone in which case the entire Sasaki cone is exhausted by extremal Sasaki metrics. A particular example is when $N$ is an algebraic K3 surface in which case there are many choices of complex structures and many choices of line bundles. In all cases $M=21\# (S^2\times S^3)$. It is interesting to contemplate the possible diffeomorphism types of the 7-manifolds $M_{\bfl,\bfw}=21\#(S^2\times S^3)\star_{l_1,l_2} S^3_\bfw$.

When $s_N<0$ there can be obstructions to the existence of extremal Sasaki metrics as we shall see explicitly in the next section.

\section{Sasakian Geometry of $S^3$-bundles over Riemann Surfaces}\label{sasoverRiem}
Much can be said when $\dim_\bbr N=2$, that is when $N$ is a Riemann surface $\grS_g$ of genus $g$. In this case there is a classification, and each $\grS_g$ admits a constant scalar curvature K\"ahler metric. It is well known that there are exactly two such bundles up to diffeomorphism, the trivial bundle $\grS_g\times S^3$ and the non-trivial bundle $\grS_g\tilde{\times} S^3$. Although there are some important differences when the Riemann surface has genus $g=0$, we first describe the properties they have in common. We should, however, note that historically the two cases were treated separately. An important difference that we should mention is that for $g>0$ we must have $l_2=1$ in order to have an $S^3$-bundle over a Riemann surface. It is precisely in this case that there are similarities between the $g=0$ and $g\neq 0$ cases. When $g>0$ the case $l_2>1$ was studied in \cite{Cas14}.

\subsection{Genus $g=0$}\label{secg0}
Toric contact structures of Reeb type, hence Sasakian, on $S^3$-bundles over $S^2$ were studied extensively in \cite{BG00b,Ler02a,Boy11,Leg10,BoPa10} among others.   In this case the Sasaki cone has dimension three. We also emphasize at this stage that the admissibility condition \eqref{adcond} must hold.
The following is a restatement of Theorem 2 of \cite{BoPa10}.

\begin{theorem}\label{BoPathm}
The contact structures $\cald_{l_1,l_2,\bfw}$ and $\cald_{l'_1,l_2,\bfw'}$ are contactomorphic if the following conditions hold
$$l_1|\bfw|=l'_1|\bfw'|, \qquad \gcd(l_2,l_1(w_1-w_2))=\gcd(l_2,l'_1(w'_1-w'_2)).$$
\end{theorem}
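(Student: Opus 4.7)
The plan is to invoke Lerman's classification of toric contact manifolds of Reeb type: in dimension $5$, two such are (equivariantly) contactomorphic if and only if their moment cones in $\bbr^3$ coincide up to the natural action of $GL(3,\bbz)$. Since $M_{\bfl,\bfw}=S^3\star_{l_1,l_2}S^3_\bfw$ carries a $T^3$-action of Reeb type (the Sasaki cone is $3$-dimensional, hence maximal for a contact $5$-manifold), the theorem follows once we show the two stated conditions force the moment cones of $M_{\bfl,\bfw}$ and $M_{\bfl',\bfw'}$ to be $GL(3,\bbz)$-equivalent.

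First I would compute the moment cone $\calc_{\bfl,\bfw}\subset\bbr^3$ explicitly from the join. The natural $T^4$-action on $S^3\times S^3_\bfw$ has moment image the positive octant in $\bbr^4$, and the join construction reduces this to a $T^3$-action on $M_{\bfl,\bfw}$ by quotienting by the subcircle generated by $L_{\bfl,\bfw}$. After clearing denominators in \eqref{Lvec}, one may take $L_{\bfl,\bfw}=(l_2,l_2,-l_1w_1,-l_1w_2)\in\bbz^4$, which is primitive thanks to the admissibility condition \eqref{adcond} (which implies $\gcd(l_1,l_2)=1$) combined with $\gcd(w_1,w_2)=1$. Standard contact-reduction bookkeeping then presents $\calc_{\bfl,\bfw}$ as a four-faceted rational polyhedral cone in the quotient lattice $\bbz^4/\langle L_{\bfl,\bfw}\rangle\cong\bbz^3$, whose primitive inward-pointing facet normals $\nu_1,\ldots,\nu_4$ can be read off explicitly in terms of $l_1,l_2,w_1,w_2$.

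Next I would identify a complete set of $GL(3,\bbz)$-invariants for this family of cones. One is the first Chern class of the contact bundle: by \eqref{c1cald2} with $\cali_N=2$ for $N=\bbc\bbp^1$, we have $c_1(\cald_{l_1,l_2,\bfw})=(2l_2-l_1|\bfw|)\grg$, so the equality $l_1|\bfw|=l'_1|\bfw'|$ forces $c_1$ (and in particular $w_2$, pinning down the $S^3$-bundle type) to agree for the two structures. A second, more delicate invariant arises as a torsion datum encoded in the lattice relations among adjacent facet normals; a direct Smith-normal-form calculation should identify it with $\gcd(l_2,l_1(w_1-w_2))$. Given both equalities in the theorem, one then exhibits an explicit $GL(3,\bbz)$ matrix carrying $\{\nu_i\}$ to $\{\nu'_i\}$ and concludes via Lerman's theorem.

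The main obstacle is this classification step: one must show that the pair $\bigl(l_1|\bfw|,\gcd(l_2,l_1(w_1-w_2))\bigr)$ really is a complete set of $GL(3,\bbz)$-invariants for the four-faceted cones arising from these joins. This is where the special structure of the join, the admissibility condition \eqref{adcond}, and the coprimality $\gcd(w_1,w_2)=1$ all enter in an essential way, and where one must also handle the subtle interplay between the $l_2$-grading (coming from the circle in the join) and the $\bfw$-grading (coming from the weighted $S^3$) in the facet normals. Once these invariants are pinned down, constructing the explicit $GL(3,\bbz)$-change of basis is routine elementary lattice arithmetic.
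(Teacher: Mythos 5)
The paper itself offers no proof here (it cites Theorem 2 of \cite{BoPa10}), but your strategy has a genuine obstruction that makes its final step fail. Lerman's classification via moment cones up to $GL(3,\bbz)$ detects \emph{equivariant} contactomorphism: two toric contact $5$-manifolds of Reeb type have $GL(3,\bbz)$-equivalent moment cones if and only if they are $T^3$-equivariantly contactomorphic. The theorem, however, asserts plain contactomorphism for families whose toric structures are typically \emph{inequivalent} as toric structures --- that is exactly the bouquet phenomenon this survey is built around. Concretely, in Example \ref{4bouq} the four data sets $(l_1,\bfw)=(4,(1,1)),(1,(5,3)),(2,(3,1)),(1,(7,1))$ with $l_2=1$ all satisfy $l_1|\bfw|=8$ and $\gcd(l_2,l_1(w_1-w_2))=1$, so the theorem declares them contactomorphic; yet they form a $4$-bouquet, i.e.\ their tori lie in four distinct conjugacy classes of maximal tori in the contactomorphism group (distinguished by equivariant Gromov--Witten invariants). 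Distinct conjugacy classes means no equivariant contactomorphism, hence by Lerman's theorem their moment cones are \emph{not} $GL(3,\bbz)$-equivalent. The ``explicit $GL(3,\bbz)$ matrix carrying $\{\nu_i\}$ to $\{\nu'_i\}$'' you propose to exhibit does not exist in general, and the pair $\bigl(l_1|\bfw|,\gcd(l_2,l_1(w_1-w_2))\bigr)$ cannot be a complete set of $GL(3,\bbz)$-invariants of the cones. (A smaller point: the theorem is only an ``if'' statement, so completeness of invariants is not even what is asked; and the parenthetical claim that $l_1|\bfw|=l'_1|\bfw'|$ pins down $w_2$ is not right.)

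The argument has to be non-equivariant. The route of \cite{BoPa10} is to pass to a quasi-regular (almost regular) quotient: each $\cald_{l_1,l_2,\bfw}$ is realized as the Boothby--Wang $S^1$-orbibundle over a log pair $(S_n,\grD)$, an (orbifold) Hirzebruch surface with $n$ determined by $l_1(w_1-w_2)$ and ramification data governed by $\gcd(l_2,l_1(w_1-w_2))$. The two hypotheses guarantee that the two base orbifolds have the same orbifold structure and the same bundle parity, hence are deformation equivalent (all even Hirzebruch surfaces are symplectomorphic to $S^2\times S^2$ for corresponding classes, all odd ones to the nontrivial bundle), with matching symplectic classes and equal orbifold Euler classes; the resulting symplectomorphism of the bases then lifts, via the naturality of the Boothby--Wang construction and Gray stability, to a contactomorphism upstairs. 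The equality $l_1|\bfw|=l'_1|\bfw'|$ is what matches $c_1(\cald)$ via \eqref{c1cald2}, and the gcd condition matches the orbifold/branch-divisor data; neither enters as a cone invariant. If you want to salvage your computation, the moment cones you describe are still the right tool for the \emph{converse} direction --- distinguishing the toric structures within one contact structure --- but not for proving the contactomorphism itself.
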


Notice that when $l_2=1$ we have (which was missed in \cite{BoPa10})

\begin{corollary}\label{l2=1}
The contact structures $\cald_{l_1,1,\bfw}$ and $\cald_{l'_1,1,\bfw'}$ are contactomorphic if and only if $l_1|\bfw|=l'_1|\bfw'|$, i.e. if and only their first Chern classes are equal.
\end{corollary}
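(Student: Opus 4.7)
The statement packages two equivalences: (a) the main biconditional ``contactomorphic $\Leftrightarrow l_1|\bfw|=l'_1|\bfw'|$'', and (b) the identification ``$l_1|\bfw|=l'_1|\bfw'|\Leftrightarrow$ equal first Chern classes''. My plan is to establish (b) first and then leverage it for both directions of (a).

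For (b) I would specialize \eqref{c1cald2} to the present setting, $N = S^2$ (so $\cali_N = 2$) and $l_2 = 1$, to obtain
\[
c_1(\cald_{l_1,1,\bfw}) = (2-l_1|\bfw|)\grg,
\]
where $\grg$ generates $H^2(M_{l_1,1,\bfw},\bbz)\cong \bbz$. The identification $H^2\cong\bbz$ follows from Lemma \ref{gentop}(2) together with Hurewicz and Universal Coefficients (equivalently, from the Serre sequence of the underlying $S^3$-bundle over $S^2$, using that $S^2$ is simply connected and that the fiber is a rational homology $3$-sphere). Since any isomorphism $\bbz\to\bbz$ is multiplication by $\pm 1$, equality of first Chern classes (under some diffeomorphism identification) translates into $2-l_1|\bfw| = \pm(2-l'_1|\bfw'|)$.

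With (b) in hand, the ``if'' direction of (a) is immediate from Theorem \ref{BoPathm}: for $l_2 = 1$ the gcd condition $\gcd(1,l_1(w_1-w_2)) = \gcd(1,l'_1(w'_1-w'_2))$ reduces tautologically to $1=1$, leaving only the hypothesis $l_1|\bfw|=l'_1|\bfw'|$. For the ``only if'' direction I would invoke the standard fact that $c_1(\cald)$, defined via the conformal class of symplectic structures $d\eta|_\cald$, is a contactomorphism invariant; thus a contactomorphism pulls back one Chern class to the other, and (b) finishes the job.

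The main obstacle is controlling the sign ambiguity in (b): in the minus-sign case one needs $l_1|\bfw|+l'_1|\bfw'| = 4$. I would dispose of this by the following arithmetic observation: $w_1,w_2\in\bbz^+$ with $\gcd(w_1,w_2)=1$ forces $|\bfw|\geq 2$ (the minimum achieved only at $\bfw=(1,1)$), and $l_1\geq 1$, so $l_1|\bfw|\geq 2$ on both sides. Consequently the only solution to $l_1|\bfw|+l'_1|\bfw'|=4$ is $l_1|\bfw|=l'_1|\bfw'|=2$, which still gives the desired equality. This short case analysis is the one genuinely non-formal step in the argument.
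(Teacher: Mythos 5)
Your argument is correct and follows the route the paper intends: the ``if'' direction is Theorem \ref{BoPathm} with the gcd condition trivially satisfied when $l_2=1$, and the ``only if'' direction uses the contactomorphism invariance of $c_1(\cald_{l_1,1,\bfw})=(2-l_1|\bfw|)\grg$ from \eqref{c1cald2} with $\cali_{S^2}=2$. Your resolution of the sign ambiguity in identifying $H^2\cong\bbz$ (the minus sign would force $l_1|\bfw|+l'_1|\bfw'|=4$, hence both equal to $2$ since $|\bfw|\geq 2$ and $l_1\geq 1$) is a detail the paper leaves implicit, and you handle it correctly.
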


There is another result that follows from Theorem \ref{BoPathm} together with Theorems 4.2 and 4.8\footnote{There is a typo in part (2) of Theorem 4.8 of \cite{BoPa10}. $2k-j+1$ should be $2k-2j+1$.} of \cite{BoPa10}. First, we mention that the Reeb vector fields in these theorems are almost regular, and then we recall that part (2) of these theorems says that the orbifold structure of $(S_n,\grD)$ is trivial if and only if $l_2$ divides $w_1-w_2$. This implies
\begin{corollary}\label{regcor}
If any $\bfw$-cone of a bouquet has a regular Reeb field then all $\bfw$-cones of the bouquet have a regular Reeb field.
\end{corollary}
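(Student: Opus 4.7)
The plan is to reduce the statement to an arithmetic invariance by first characterizing when a $\bfw$-Sasaki cone contains a regular Reeb vector field, and then checking that this characterization is preserved under the contactomorphism criterion of Theorem~\ref{BoPathm}.

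First I would argue that within a $\bfw$-Sasaki cone the only ray that can be regular is the almost regular one, $\bfv=(1,1)$. Indeed, for any rational $\bfv=(v_1,v_2)$ with $\gcd(v_1,v_2)=1$, the quotient $B_{l_1,l_2,\bfv,\bfw}$ is the orbifold $(S_n,\grD)$ whose ramification indices, as recorded just after \eqref{twistT2}, are $m_i=v_i\,l_2/s$ with $s=\gcd(|w_1v_2-w_2v_1|,l_2)$. The quotient is a manifold precisely when $m_1=m_2=1$, which forces $v_1=v_2=1$ and $s=l_2$; irrational rays are irregular by definition. Specialising $s$ to $\bfv=(1,1)$ gives $s=\gcd(|w_1-w_2|,l_2)$, so the $\bfw$-cone contains a regular Reeb field if and only if $l_2\mid w_1-w_2$, in agreement with the remark preceding the corollary.

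Second I would compare two $\bfw$-cones $\grk(\cald_{l_1,l_2,\bfw},J)$ and $\grk(\cald_{l_1',l_2,\bfw'},J')$ belonging to the same bouquet, i.e.\ sharing the same underlying contact structure $\cald$. Theorem~\ref{BoPathm} then yields
$$\gcd(l_2,\,l_1(w_1-w_2))=\gcd(l_2,\,l_1'(w_1'-w_2')).$$
The admissibility condition \eqref{adcond} gives $\gcd(l_2,l_1)=\gcd(l_2,l_1')=1$, so the factors $l_1,\,l_1'$ may be peeled off from each side, producing
$$\gcd(l_2,\,w_1-w_2)=\gcd(l_2,\,w_1'-w_2').$$
In particular $l_2\mid w_1-w_2$ if and only if $l_2\mid w_1'-w_2'$; combined with the first step this establishes the corollary.

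The only delicate point I anticipate is the first step, where one must rule out hidden regular Reeb vector fields arising from non-almost-regular rational rays. However, this is handled directly by the explicit formula $m_i=v_i\,l_2/s$ cited above, so no additional machinery is required, and the argument is essentially arithmetic once Theorem~\ref{BoPathm} and the admissibility condition are in place.
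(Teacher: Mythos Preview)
Your argument is correct and follows essentially the same route as the paper's: the paper deduces the corollary from Theorem~\ref{BoPathm} together with the regularity criterion ``the almost regular quotient $(S_n,\grD)$ is a manifold iff $l_2\mid w_1-w_2$'', which it imports from Theorems~4.2 and~4.8(2) of \cite{BoPa10}, whereas you derive that same criterion directly from the ramification formula $m_i=v_i l_2/s$ recorded after \eqref{twistT2}. The remaining step---stripping $l_1,l_1'$ from the gcd via the admissibility condition \eqref{adcond}---is identical in both. One small caveat worth being explicit about: Theorem~\ref{BoPathm} is stated only as an ``if'', so when you invoke it in the converse direction you are tacitly assuming (as the paper does) that the $\bfw$-cones of the bouquet under discussion are those identified by the equivalence criterion of \cite{BoPa10}; this is the intended reading in Section~\ref{secg0}, but it would not hurt to say so.
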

We call a bouquet with a regular Reeb vector field a {\it regular bouquet}. There are many regular bouquets in $S^3$-bundles over $S^2$. For examples of regular bouquets see Section 3 of \cite{Boy10b}.

The phenomenon of the existence of multiple CSC rays in the Sasaki cone on $S^3$-bundles over $S^2$ was discovered by Legendre \cite{Leg10}. Here we give a bound for the existence of multiple CSC rays in the $\bfw$-Sasaki subcone. 

\begin{proposition}\label{multcsc}
Consider the toric contact structure $\cald_{l_1,l_2,\bfw}$ with $\bfw\neq (1,1)$ on an $S^3$-bundle over $S^2$. If the inequality 
$$2l_2>16l_1w_1-5l_1w_2$$
holds, then there are at least three CSC Sasaki metrics in the $\bfw$-Sasaki subcone.
\end{proposition}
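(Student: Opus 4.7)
The plan is to convert the existence of a CSC ray in the $\bfw$-Sasaki cone into a root of an explicit low-degree polynomial in $r$, and then produce three roots by evaluating this polynomial at carefully chosen test points. Since $N=S^2=\bbc\bbp^1$ we have $d_N=1$ and Fano index $\cali_N=2$, so $s_{N_n}=2/n$ with $n=l_1(w_1v_2-w_2v_1)/s$ and $s=\gcd(|w_1v_2-w_2v_1|,l_2)$, while $m_i=v_il_2/s$. The $\bfw$-subcone is parametrised by the ratio $t=v_2/v_1>0$, or equivalently by
\[
r=\frac{w_1v_2-w_2v_1}{w_1v_2+w_2v_1}\in(-1,1),
\]
and after pulling back the CSC criterion \eqref{candkeqn} under this identification one gets a single scalar equation in $r$, with coefficients rational in $l_1,l_2,w_1,w_2$. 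Using $(1+r)^2-(1-r)^2=4r$ and $(1+r)^3-(1-r)^3=2r(3+r^2)$, clearing denominators yields a polynomial $P_{l_1,l_2,\bfw}(r)$ of degree at most four whose zeros in $(-1,1)$ correspond, via Section \ref{sasadm}, to CSC rays in the $\bfw$-Sasaki cone (including irregular ones, obtained by continuity from the quasi-regular case).

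Next I would examine $P_{l_1,l_2,\bfw}$ on the closed interval $[-1,1]$. The values $P_{l_1,l_2,\bfw}(\pm 1)$ are computed by letting $v_1\to 0$ and $v_2\to 0$ respectively; after simplification these limits are explicit polynomial expressions in $l_1,l_2,w_1,w_2$ whose signs are straightforward to read off. Theorem \ref{admjoincsc} already guarantees the existence of at least one zero of $P_{l_1,l_2,\bfw}$ in $(-1,1)$ irrespective of the size of $l_2$, and the natural distinguished interior point is the almost regular ray $\bfv=(1,1)$, i.e.\ $r_0=(w_1-w_2)/(w_1+w_2)$ (nonzero since $\bfw\neq(1,1)$ and $w_1>w_2$). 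The strategy is to compute $P_{l_1,l_2,\bfw}(r_0)$ and one further auxiliary value and to show that, under the hypothesis $2l_2>16l_1w_1-5l_1w_2$, the sequence of signs of $P_{l_1,l_2,\bfw}$ at these four test points alternates, forcing three sign changes on $(-1,1)$. The intermediate value theorem then yields three distinct roots in $(-1,1)$, each giving a CSC Sasakian ray in the $\bfw$-cone (distinct rays because distinct values of $r$ give inequivalent Reeb directions in $\gt_\bfw^+$).

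The core computational step, and the main obstacle, is isolating the factor $(2l_2-16l_1w_1+5l_1w_2)$ (or a close variant) in one of these evaluations of $P_{l_1,l_2,\bfw}$ so that the stated linear bound is exactly what flips a sign. Concretely, at one of the interior test points one expects $P_{l_1,l_2,\bfw}$ to factor as a manifestly positive quantity times an affine combination of $l_2$ and $l_1w_1,l_1w_2$; demanding this combination be positive should reproduce the inequality $2l_2>16l_1w_1-5l_1w_2$. Keeping track of the interplay between the factor $s=\gcd(|w_1v_2-w_2v_1|,l_2)$ and the coefficients of $P_{l_1,l_2,\bfw}$ is the bookkeeping difficulty: $s$ cancels out of the final polynomial in $r$ (since both $n$ and the $m_i$ scale with $1/s$), but one must verify that the cancellation is total so the bound depends only on $(l_1,l_2,\bfw)$ and not on the discrete data $(v_1,v_2)$.

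Finally, the hypothesis $\bfw\neq(1,1)$ ensures $r_0\neq 0$, so the almost regular test point is genuinely interior and away from the boundary, and it also ensures that the Reeb vector fields corresponding to the three roots lie in the two-dimensional $\bfw$-cone (rather than collapsing onto a single ray through $L_{\bfl,\bfw}$). With the sign pattern established, the conclusion of at least three CSC rays follows immediately.
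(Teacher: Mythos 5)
Your overall strategy is the right one, and it is in fact the strategy behind the paper's own (one-line) proof: the authors simply invoke the analysis of Example 6.7 of \cite{BoTo14b}, which carries out exactly the reduction you describe --- specialize \eqref{candkeqn}--\eqref{whatkis} to $d_N=1$, $s_{N_n}=2/n$, substitute $m_i=v_il_2/s$, $n=l_1(w_1v_2-w_2v_1)/s$, observe that $s$ cancels, and count sign changes of the resulting low-degree polynomial along the $\bfw$-cone. Your observations that the equation is homogeneous in $\bfv$, that $s$ drops out, and that distinct roots give distinct rays are all correct.

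The genuine gap is that the decisive step is not performed: the entire content of the proposition is the specific linear bound $2l_2>16l_1w_1-5l_1w_2$, and you explicitly defer its derivation, writing only that ``one expects'' the polynomial to factor at some unspecified interior test point as a positive quantity times an affine form in $l_2$, $l_1w_1$, $l_1w_2$. Without naming the test points, computing the values there, and exhibiting the sign pattern, what you have proved is only the qualitative statement that three CSC rays exist for $l_2$ sufficiently large --- which is already Theorem \ref{admjoincsc} --- not the quantitative bound being asserted. A secondary issue with the setup as written: parametrizing the cone by $r\in(-1,1)$ places the excluded value $r=0$ (where $n=0$, $\bfv\propto\bfw$, and the quotient degenerates to the product $N\times\bbc\bbp^1[\bfw]$) in the interior of your interval, so you must either check that your cleared-denominator polynomial extends through $r=0$ with the correct zero set, or work instead with a parameter such as $t=v_2/v_1\in(0,\infty)$ on which the CSC condition is polynomial without this artificial singularity; an intermediate-value argument that straddles $r=0$ without this check could count a spurious root.
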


\begin{proof}
The proof follows from the analysis of Example 6.7 in \cite{BoTo14b}.
\end{proof}

When $\bfw=(1,1)$ there can also be multiple CSC rays, but two of them are equivalent under the action of the Weyl group of the CR automorphism group. In this case our bound for multiple rays is $2l_2>11l_1$.

\begin{example}\label{4bouq}
Our first example is a $4$-bouquet on $S^2\times S^3$ when $l_2=1$. In this case the orbifold structure is trivial, viz. $(S_{2m},\emptyset)$ where $S_{2m}$ is an even Hirzebruch surface with $m=\frac{1}{2}l_1(w_1-w_2)$. So this is a regular bouquet.
\bigskip

\centerline{$4$-bouquet on $S^2\times S^3$}
\begin{center}
\begin{tabular}{| l | l | l |}
\hline
$m$ & $l_1$ & \bfw \\ \hline
0 & 4 & (1,1) \\ \hline
1 & 1 & (5,3) \\ \hline
2 & 2 & (3,1) \\ \hline
3 & 1 & (7,1) \\ \hline
\end{tabular}
\end{center}
\medskip

However, the $4$-bouquet does not persist for arbitrary $l_2$. First, we need to satisfy the admissibility condition \eqref{adcond}. Thus, to satisfy condition \eqref{adcond} for all 4 contact structures we need $\gcd(l_2,210)=1$ in which case $l_2$ cannot divide $w_1-w_2$. So if we choose such an $l_2\neq 1$ we see that the three members with $m=1,2,3$ form a non-regular 3-bouquet which cannot include $m=0$. So for such an $l_2>1$ the 3-bouquet has only non-trivial orbifold quotients, $(S_{2m},\grD)$ for $m=1,2,3$ where the ramification index of both divisors is $l_2$ for all three quotients. Furthermore, by Theorem \ref{admjoincsc} for $l_2$ sufficiently large, there exist 3 CSC rays in each of the 3 Sasaki cones of the bouquet. In all cases the contact structure $\cald$ is determined by $l_2$ since its first Chern class is $c_1(\cald)=2l_2-8$. In all cases the symplectic form on the base orbifold is $\grs_0+4\grs_F$ where $\grs_0,\grs_F$ are the standard area forms on the base $S^2$ and fiber $S^2$, respectively.

From Proposition \ref{multcsc} we see that if we choose $l_2>53$ all three members of the 3-bouquet will have three CSC Sasaki metrics.
\end{example}

\begin{example}\label{Ypqex}
This case was treated in \cite{Boy11,BoPa10,BoTo13b}. It is the $Y^{p,q}$ manifolds diffeomorphic to $S^2\times S^3$ discovered in \cite{GMSW04a}, where $p$ and $q$ are relatively prime integers satisfying $1\leq q<p$. The relation between the pair $(p,q)$ and our notation is $l_2=p$ and  $l_1\bfw=(p+q,p-q)$. Applying Theorem \ref{BoPathm} to the $Y^{p,q}$'s we see that $Y^{p,q}$ is contactomorphic to $Y^{p',q'}$ if
$p'=p$ and $\gcd(p',2q')=\gcd(p,2q)$. But since $p$ and $q$ are relatively prime we conclude that $Y^{p',q'}$ is contactomorphic to $Y^{p,q}$ if $p=p'$. So if for each integer $p>1$ we let $\phi(p)$ denote the Euler phi function, that is, the number of positive integers $q$ that are less than $p$ and relatively prime to $p$, then there is a $\phi(p)$-bouquet $\gB_{\phi(p)}$ of Sasaki cones on $S^2\times S^3$ such that each Sasaki cone has a unique Reeb vector field with a Sasaki-Einstein metric in its $\bfw$ subcone. Uniqueness follows from \cite{CFO07}.  Also for each Sasaki cone each element of the $\bfw$ subcone has, up to contact isotopy, both an extremal Sasaki metric and a Sasaki-Ricci soliton.
\end{example}

\subsection{Genus $g>0$}
The existence of CSC Sasaki metrics follows from Theorem \ref{admjoincsc}; however,  this case was studied in detail earlier in \cite{BoTo11,BoTo13}.  In particular, when $g>0$ there is a unique ray of admissible constant scalar curvature Sasaki metrics in each 2-dimensional Sasaki cone. But also in this case the bouquet phenomenon occurs. 

Before stating our general theorem, we revisit Example \ref{4bouq}:

\begin{example}[Example \ref{4bouq} revisited]\label{ex7.5rev}
When $g>0$ we must take $l_2=1$ to obtain $\grS_g\times S^3$ \cite{Cas14}, and for this case we recapture the 4-bouquet of Example \ref{4bouq} for all genera $g$.
\end{example}

More generally we have

\begin{theorem}\label{g>0thm}
Let $M$ be the total space of an $S^3$-bundle over a Riemann surface $\grS_g$ of genus $g>0$. Then for each $k\in \bbz^+$ the 5-manifold $M$ admits a contact structure $\cald_k$ of Sasaki type consisting of $k$ 2-dimensional Sasaki cones $\grk(\cald_k,J_m)$ labelled by $m=0,\ldots,k-1$ each of which admits a unique ray of Sasaki metrics of constant scalar curvature such that the transverse K\"ahler structure admits a Hamilitonian 2-form. Moreover, if $M$ is the trivial bundle $\grS_g\times S^3$, there is a $k+1$-bouquet $\gB_{k+1}(\cald_k)$, consisting of $k$ 2-dimensional Sasaki cones and one 1-dimensional Sasaki cone on each contact structure $\cald_k$. 
\end{theorem}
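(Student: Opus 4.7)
The plan is to specialize the general construction of Sections~\ref{admissec} and~\ref{sasadm} to $N=\grS_g$ with $l_2=1$, parametrize the resulting contact structures by their first Chern class, and then count admissible Sasaki cones. First I would start from the regular Sasaki $3$-manifold $M_n$ obtained by Boothby--Wang from $\grS_g$ equipped with its uniformization CSC K\"ahler metric (using a primitive K\"ahler form), and form the join $M_n\star_{l_1,1}S^3_\bfw$. The admissibility condition~\eqref{adcond} is automatic when $l_2=1$. By the analysis of~\cite{Cas14} this $5$-manifold is an $S^3$-bundle over $\grS_g$, and formula~\eqref{w2} shows that since $\cali_{\grS_g}=2-2g$ is even, the bundle is trivial precisely when $l_1|\bfw|$ is even.

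Next I would organize these contact structures by their first Chern class. From \eqref{c1cald} one reads $c_1(\cald_{l_1,1,\bfw})=\pi_{M_n}^*c_1(\grS_g)-l_1|\bfw|\,\grg$, depending only on $N:=l_1(w_1+w_2)$; the analogue of Corollary~\ref{l2=1} in the $g>0$ setting (which is much simpler because $l_2=1$ eliminates all orbifold subtleties) then says these contact structures are pairwise contactomorphic for fixed $N$. For each $k\in\bbz^+$ choose $N=N_k$ of the appropriate parity so that $N_k=l_1(w_1+w_2)$ admits at least $k$ solutions with $w_1>w_2\geq 1$ and $\gcd(w_1,w_2)=1$; this is an elementary number-theoretic statement. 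Each such factorization yields a different transverse complex structure $J_m$ on the same underlying contact $5$-manifold $(M,\cald_k)$, producing a $2$-dimensional $\bfw$-Sasaki cone $\grk(\cald_k,J_m)$ for $m=0,\dots,k-1$. Theorem~\ref{admjoincsc} then provides a ray of CSC Sasaki metrics inside each cone, and uniqueness of the admissible CSC ray follows by observing that \eqref{candkeqn} with $d_N=1$ and $m_1=m_2=1$ reduces to a cubic equation in the single parameter $r\in(-1,1)$, which admits a unique admissible root.

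Finally, for the trivial bundle $M=\grS_g\times S^3$ the parity condition forces $N_k$ to be even, and so the distinguished factorization $(l_1,\bfw)=(N_k/2,(1,1))$ is also available. At $\bfw=(1,1)$ the CR automorphism group of $S^3_\bfw$ enlarges from $T^2$ to a group containing $SU(2)$; the corresponding Weyl group $S_2$ acts on the Sasaki cone by swapping the two weights, and the reduced Sasaki cone $\grk(\cald_k,J_k)$ has dimension $1$ after this quotient. Adjoining this ray produces the $(k+1)$-bouquet $\gB_{k+1}(\cald_k)$. For the non-trivial bundle $N_k$ is odd, so $w_1+w_2$ is odd, $\bfw=(1,1)$ is excluded, and only the $k$ two-dimensional cones survive. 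I expect the main technical obstacle to be the uniqueness of the admissible CSC root when $g\geq 2$ (so $s_N<0$), since in this regime the positivity hypothesis of Proposition~\ref{adKahprop} is not automatic and must be verified directly from the explicit cubic; controlling the sign of the resulting polynomial on $(-1,1)$ is the delicate step.
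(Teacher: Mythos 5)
Your construction of the $k$ two-dimensional cones follows essentially the route taken in the paper's sources \cite{BoTo11,BoTo13}: fix $l_2=1$, fix $c_1(\cald)=(2-2g-l_1|\bfw|)\grg$, enumerate the coprime factorizations of $l_1(w_1+w_2)$, and feed each resulting join into Theorem \ref{admjoincsc}. Two secondary points should be flagged. First, the claim that these contact structures all coincide is not ``the analogue of Corollary \ref{l2=1}'': that corollary rests on Theorem \ref{BoPathm}, a toric result specific to $g=0$. For $g>0$ the identification is made on the quotients, via the symplectomorphism classification of ruled symplectic $4$-manifolds over $\grS_g$ together with Boothby--Wang and Gray stability; and to have a genuine bouquet one must also show the $k$ cones represent distinct $T$-equivalence classes (the paper does this with equivariant Gromov--Witten invariants). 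Second, your uniqueness argument sets $m_1=m_2=1$ in \eqref{candkeqn}, but for $l_2=1$ the ramification indices are $m_i=v_i$, which vary with the ray $\xi_\bfv$; your cubic therefore only tests the single ray $\bfv=(1,1)$. The correct reduction is a single polynomial equation in the ray parameter carrying the full $\bfv$-dependence of $m_i$, $n$ and $s_{N_n}$, whose roots one then counts using $s_N\le 0$.

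The genuine error is your identification of the extra $1$-dimensional cone on $\grS_g\times S^3$. Quotienting the $2$-dimensional $\bfw=(1,1)$ cone by the Weyl group $S_2$ does not lower its dimension --- the reduced cone is a $2$-dimensional half-cone --- and in any case that cone is already one of the $k$ two-dimensional members: it is the $m=0$ member, exactly as in Example \ref{4bouq}, which Example \ref{ex7.5rev} recaptures for all genera. The $1$-dimensional cone arises from an entirely different transverse complex structure, not produced by the join: one whose transverse quotient is the projectivization $\bbp(E)\rightarrow\grS_g$ of a \emph{non-split} (indecomposable) rank-two holomorphic bundle. Such ruled surfaces exist only for $g\ge 1$ and admit no Hamiltonian holomorphic circle action, so the maximal torus in $\gC\gr(\cald_k,J)$ is generated by the Reeb field alone and the corresponding Sasaki cone is genuinely $1$-dimensional; this is precisely why the phenomenon is special to $g>0$. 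Without this ingredient your argument yields only the $k$-bouquet of $2$-dimensional cones, not the $(k+1)$-bouquet asserted in the theorem.
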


The $k$ 2-dimensional Sasaki cones on $\grS_g\times S^3$ are inequivalent as $S^1$-equivariant contact structures. The 2-tori associated to the $\bfw$-Sasaki cones belong to distinct conjugacy classes of maximal tori in the contactomorphism group $\gC\go\gn(\cald_k)$. This is shown by computing equivariant Gromov-Witten invariants.

We know that for $g\leq 4$ the entire 2-dimensional $\bfw$-cones are exhausted by extremal Sasaki metrics; however, there are also some non-existence results in this case as in \cite{To-Fr98}. 
\begin{theorem}\label{deg>0intro}
For any choice of genus $g=20,21,...$ there exist at least one choice of $(k,m)$ with $m=1,\ldots,k-1$ such that the regular ray in the Sasaki cone $\grk(\cald_k,J_m)$ admits no extremal representative. 
\end{theorem}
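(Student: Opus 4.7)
The plan is to reduce the Sasakian non-existence question to a K\"ahler non-existence question on ruled surfaces over $\grS_g$, and then invoke the explicit obstruction to extremality established by T\o{}nnesen-Friedman in \cite{To-Fr98}. First, since $g>0$ forces $l_2=1$, every $\bfw$-Sasaki cone contains a regular ray (as noted after Diagram \eqref{comdia1}). For this ray $\bfv=(1,1)$, so the ramification indices are $m_1=m_2=1$, and the quotient is a \emph{smooth} ruled surface $S_n\to \grS_g$ carrying the admissible K\"ahler class $\Omega_{\mathbf r}$ with $r=(w_1-w_2)/(w_1+w_2)$ and $n=l_1(w_1-w_2)$. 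Because the scalar curvatures of a Sasaki metric and its transverse K\"ahler metric differ by an additive constant, the regular ray admits an extremal Sasaki representative if and only if $\Omega_{\mathbf r}$ admits an extremal K\"ahler representative.

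Next, I would specialize Proposition \ref{extremalcond} to $d_N=1$ with $(m_1,m_2)=(1,1)$: the extremal polynomial $F_{\mathbf r}(\gz)$ is the unique degree-four polynomial satisfying
\begin{equation*}
F''(\gz)=2s_{N_n}r+(\alpha \gz+\beta)(1+r\gz),\qquad F(\pm 1)=0,\quad F'(-1)=2,\quad F'(1)=-2,
\end{equation*}
with $s_{N_n}=(2-2g)/n<0$ once $g\geq 2$. By the theorem of \cite{ACGT08} quoted above, $\Omega_{\mathbf r}$ contains an extremal K\"ahler metric if and only if $F_{\mathbf r}$ is strictly positive on $(-1,1)$. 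The \emph{only if} direction is exactly what is needed, and it is legitimate here precisely because we are in the smooth manifold case (trivial orbifold) where Chen--Tian uniqueness applies.

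The crux is then to invoke the analysis of \cite{To-Fr98}, in which explicit ranges of $(g,n,r)$ were exhibited where the corresponding degree-four polynomial fails strict positivity on $(-1,1)$ for ruled surfaces $S_n\to \grS_g$ of sufficiently large genus. I would translate this obstruction region into the discrete data of the join: the index $m\in\{0,\ldots,k-1\}$ parametrizing the bouquet $\gB_{k+1}(\cald_k)$ is (by the analogue of Example \ref{4bouq}) a linear function of $l_1(w_1-w_2)$, while $k$ controls $l_1|\bfw|$, i.e.\ the contact invariant $c_1(\cald_k)$. For each $g\geq 20$, the goal is to exhibit at least one admissible pair $(k,m)$ with $1\leq m\leq k-1$ (so $\bfw\neq(1,1)$) whose resulting $(n,r)$ lands inside the non-positivity region of \cite{To-Fr98}.

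The main obstacle is exactly this final matching step: the continuous non-positivity region from \cite{To-Fr98} must be compatible with the integrality constraints $\gcd(w_1,w_2)=1$, $l_1\in\bbz^+$, and $m\in\{1,\ldots,k-1\}$. The specific threshold $g=20$ reflects the smallest genus at which this Diophantine compatibility can first be realized; below that value, the interval of $r$ on which $F_{\mathbf r}$ becomes non-positive is simply too narrow to contain an admissible rational of the form $(w_1-w_2)/(w_1+w_2)$ with the required $m$. Making this precise is a matter of explicit root analysis of $F_{\mathbf r}$, carried out along the lines of \cite{To-Fr98} and the treatment in \cite{BoTo11,BoTo13}.
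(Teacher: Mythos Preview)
The paper is a survey and does not supply a self-contained proof of this theorem; it simply states the result and points to \cite{To-Fr98} and the authors' earlier work \cite{BoTo11,BoTo13}. Your outline is precisely the argument carried out in those references: pass to the regular ray (legitimate since $l_2=1$ when $g>0$), identify the transverse quotient with a smooth ruled surface $S_n\to\grS_g$, use the equivalence of Sasaki-extremality with K\"ahler-extremality of $\Omega_{\mathbf r}$, and then invoke the \emph{only if} direction of the \cite{ACGT08} theorem together with the explicit failure of positivity of the extremal polynomial established in \cite{To-Fr98}. So your strategy is correct and matches the intended proof.

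The only soft spot is your last paragraph. You correctly flag the Diophantine matching as the substantive work, but your explanation of the threshold is slightly off: the obstruction in \cite{To-Fr98} already kicks in at lower genus on the K\"ahler side (the critical inequality there involves a cubic in the genus whose relevant root is well below $20$). The bound $g\geq 20$ in the Sasakian statement is not the first genus at which \emph{some} admissible rational $r=(w_1-w_2)/(w_1+w_2)$ lands in the non-positivity interval, but rather the first genus at which one can guarantee such a choice with $m\in\{1,\ldots,k-1\}$ for \emph{every} $g$ from that point onward; the precise value comes from the explicit estimates in \cite{BoTo13}. To turn your outline into a proof you would need to reproduce that computation, not just the qualitative picture.
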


\def\cprime{$'$} \def\cprime{$'$} \def\cprime{$'$} \def\cprime{$'$}
  \def\cprime{$'$} \def\cprime{$'$} \def\cprime{$'$} \def\cprime{$'$}
  \def\cdprime{$''$} \def\cprime{$'$} \def\cprime{$'$} \def\cprime{$'$}
  \def\cprime{$'$}
\providecommand{\bysame}{\leavevmode\hbox to3em{\hrulefill}\thinspace}
\providecommand{\MR}{\relax\ifhmode\unskip\space\fi MR }
\providecommand{\MRhref}[2]{%
  \href{http://www.ams.org/mathscinet-getitem?mr=#1}{#2}
}
\providecommand{\href}[2]{#2}

\end{document}